\documentclass[11pt]{amsart}
\usepackage{amsfonts,amscd}
\usepackage{graphicx}
\usepackage{amsmath,latexsym,amssymb,amsthm}
\usepackage{hyperref}
\usepackage{cite}
\usepackage{lscape,fancyhdr}
\usepackage{a4}

\textwidth 6.5in \textheight 9in \oddsidemargin0.00in
\evensidemargin0.00in

\newcounter{cnt1}
\newcounter{cnt2}
\newcounter{cnt3}
\newcommand{\blr}{\begin{list}{$($\roman{cnt1}$)$} {\usecounter{cnt1}
        \setlength{\topsep}{0pt} \setlength{\itemsep}{0pt}}}
\newcommand{\bla}{\begin{list}{$($\alph{cnt2}$)$} {\usecounter{cnt2}
        \setlength{\topsep}{0pt} \setlength{\itemsep}{0pt}}}
\newcommand{\bln}{\begin{list}{$($\arabic{cnt3}$)$} {\usecounter{cnt3}
                \setlength{\topsep}{0pt} \setlength{\itemsep}{0pt}}}
\newcommand{\el}{\end{list}}

\newtheorem{Thm}{Theorem}[section]
\newtheorem{Lem}[Thm]{Lemma}
\newtheorem{Prop}[Thm]{Proposition}
\newtheorem{Def}[Thm]{Definition}
\newtheorem{Exm}[Thm]{Example}
\newtheorem{Rem}[Thm]{Remark}
\newtheorem{Cor}[Thm]{Corollary}

\begin{document}

\title{On Hom-Gerstenhaber algebras and Hom-Lie algebroids}
\author{Ashis Mandal and Satyendra Kumar Mishra}
\footnote{The research of S.K. Mishra is supported by CSIR-SPM fellowship grant 2013 .}

\begin{abstract}

We define the notion of hom-Batalin-Vilkovisky algebras and strong differential hom-Gerstenhaber algebras as a special class of hom-Gerstenhaber algebras and provide canonical examples associated to some well-known hom-structures. Representations of a hom-Lie algebroid on a hom-bundle are defined and a cohomology of a regular hom-Lie algebroid with coefficients in a representation is studied. We discuss about relationship between these classes of hom-Gerstenhaber algebras and geometric structures on a vector bundle. As an application, we associate a homology to a regular hom-Lie algebroid and then define a hom-Poisson homology associated to a hom-Poisson manifold.   
\end{abstract}
\footnote{AMS Mathematics Subject Classification (2010): $17$B$99, $ $17$B$10,$ $53$D$17$. }
\keywords{Hom-Gerstenhaber algebras, Hom-Lie-Rinehart algebras, Hom-Poisson structures, Hom-Lie algebroids, Hom-structures.}
\maketitle

\section{Introduction}
Hom-Lie algebras were introduced in the context of $q$-deformation of Witt and Virasoro algebras. In a sequel, various concepts and properties have been derived to the framework of other hom-algebras as well. The study of hom-algebras appeared extensively in the work of J. Hartwig, D. Larsson, A. Makhlouf, S. Silvestrov, D. Yau and other authors (\cite{DefHLIE}, \cite{HLIE01}, \cite{HALG}, \cite{NtHOM},\cite{HALG2}). 

More recently, the notion of hom-Lie algebroid is introduced in \cite{hom-Lie} by going through a formulation of hom-Gerstenhaber algebra and following the classical bijective correspondence between Lie algebroids and Gerstenhaber algebras. On the other hand, there are canonically defined adjoint functors between category of Lie-Rinehart algebras and category of Gerstenhaber algebras. This leads us to proceed further and define the category of hom-Lie-Rinehart algebras in \cite{old}, and discuss adjoint functors between the category of hom-Gerstenhaber algebras and the category of hom-Lie-Rinehart algebras. Furthermore, the notion of a hom-Lie bialgebroid and a hom-Courant algebroid is defined in \cite{hom-Lie1}.

There are some well known algebraic structures such as Batalin-Vilkovisky algebras and differential Gerstenhaber algebras satisfying nice relationship with Lie-Rinehart algebras and different geometric stuctures on a vector bundle (in \cite{Xu}, \cite{YK1}, \cite{Hueb3} and the references therein). In this paper, our first goal is to define hom-Batalin-Vilkovisky algebras and strong differential hom-Gerstenhaber algebras as a special class of hom-Gerstenhaber algebras and present several examples which are obtained canonically. 
 
 In \cite{hom-Lie1}, a hom-bundle of rank $n$ is defined, as a triplet $(A,\psi,\alpha)$ where $A$ is a rank $n$ vector bundle over a smooth manifold  $M$, the map $\psi:M\rightarrow M$ is a smooth map and $\alpha:\Gamma A\rightarrow \Gamma A$ is a $\psi^*$-function linear map, i.e. $\alpha(f.x)=\psi^*(f).\alpha(x)$ for $f\in C^{\infty}(M), ~x\in \Gamma A$. It is proved in \cite{hom-Lie} that hom-Lie algebroid structures on the hom-bundle $(A,\psi,\alpha)$ are in bijective correspondence with hom-Gerstenhaber algebra structures on $(\mathfrak{A},\tilde{\alpha})$, where $\mathfrak{A}=\oplus_{0\leq k\leq n}\Gamma (\wedge^kA)$ and the map $\tilde{\alpha}$ is an extension of $\alpha$ to higher degree elements. Let $(A,\psi,\alpha)$ be an invertible hom-bundle. Then, the second goal of this paper is to analyse the following.
\begin{itemize}
\item The relationship between hom-Batalin-Vilkovisky algebra structures on $(\mathfrak{A},\tilde{\alpha})$ and hom-Lie algebroid structures on the hom-bundle $(A,\psi,\alpha)$ with a representation of this hom-Lie algebroid on the hom-bundle $(\wedge^n A,\psi,\tilde{\alpha})$.
\item The relationship between strong differential hom-Gerstenhaber algebra structures on $(\mathfrak{A},\tilde{\alpha})$ and hom-Lie bialgebroid structures on the hom-bundle $(A,\psi,\alpha)$.
\end{itemize}
For the first relationship, we prove a more general result (Corollary \ref{Res1}) for hom-Lie-Rinehart algebras (see Definition \ref{hom-LR}). As an application, we get a homology associated to a hom-Poisson manifold $(M,\psi,\pi)$ where the map $\psi:M\rightarrow M$ is a diffeomorphism. We call this homology as ``hom-Poisson homology" since for the case when $\psi$ is the identity map on $M$, this gives the Poisson homology. The paper is organized as follows:

In Section $2$, we recall preliminaries on hom-structures. This will help us to fix several notations to be used in the later part of our discussion.

 In Section $3$, we define a hom-Batalin-Vilkovisky algebra as an exact hom-Gerstenhaber algebra and give some examples. Right modules and subsequently a homology is studied for a hom-Lie-Rinehart algebra with coefficients in a right module. We also express  homology of a hom-Lie-Rinehart algebra with trivial coefficients in terms of the associated hom-Gerstenhaber algebra. We study a cohomology of regular hom-Lie-Rinehart algebra $(\mathcal{L},\alpha)$ with coefficients in a left module $(M,\beta)$. If the underlying $A$-module $L$ is projective of rank $n$, then we prove a one-one correspondence between right $(\mathcal{L},\alpha)$-module structures on $(A,\phi)$ and left $(\mathcal{L},\alpha)$-module structures on $(\wedge^n L,\alpha)$.

In Section $4$, we define representations of a hom-Lie algebroid on a hom-bundle. If $\mathcal{A}:=(A,\phi,[-,-],\rho,\alpha)$ is a regular hom-Lie algebroid, where $A$ is a rank $n$ vector bundle over $M$, then it is proved that there exists a bijective correspondence between exact generators of the associated hom-Gerstenhaber algebra and representations of $\mathcal{A}$ on the hom-bundle  $(\wedge^n A,\psi,\tilde{\alpha})$. We associate a homology to a regular hom-Lie algebroid when the underlying vector bundle is a real vector bundle. For a hom-Poisson manifold $(M,\psi,\pi)$ (where $\psi:M\rightarrow M$ is a diffeomorphism), we define hom-Poisson homology as the homology of its associated cotangent hom-Lie algebroid. Moreover, we define a cochain complex for a regular hom-Lie algebroid with coefficients in a representation.   

In the last Section, we define strong differential hom-Gerstenhaber algebras and give several examples. Finally, We discuss the relationship between strong differential hom-Gerstenhaber algebras and hom-Lie bialgebroids.
 
\section{Preliminaries on hom-structures}
In this section, we recall some basic definitions from \cite{hom-Lie}, \cite{old}, and \cite{OnhLie}. Let $R$ be a commutative ring with unity and $\mathbb{Z}_+$ be the set of all non-negative integers. 
\begin{Def}
A hom-Lie algebra is a triplet $(\mathfrak{g},[-,-],\alpha)$ where $\mathfrak{g}$ is a $R$-module equipped with a skew-symmetric $R$-bilinear map $[-,-]:\mathfrak{g}\times \mathfrak{g}\rightarrow \mathfrak{g}$ and a  Lie algebra homomorphism $\alpha:\mathfrak{g}\rightarrow \mathfrak{g}$ such that the hom-Jacobi identity holds, i.e.,
$ [\alpha(x),[y,z]]+[\alpha(y),[z,x]]+[\alpha(z),[x,y]] = 0$
for all $x,y,z\in \mathfrak{g}$.

If $\alpha$ is an automorphism of $\mathfrak{g}$, then $(\mathfrak{g},[-,-],\alpha)$ is called a regular hom-Lie algebra.
\end{Def}

\begin{Def}\label{Rep-hom-Lie}
A representation of a hom-Lie algebra $(\mathfrak{g},[-,-],\alpha)$ on a $R$-module $V$ is a pair $(\rho, \alpha_V)$ of $R$-linear maps $\rho: \mathfrak{g}\rightarrow \mathfrak{g}l(V),~\alpha_V: V\rightarrow V$ such that 
\begin{equation*}
\rho(\alpha(x))\circ \alpha_V= \alpha_V\circ \rho(x),
\end{equation*} and
\begin{equation*}
\rho([x,y])\circ \alpha_V= \rho(\alpha(x))\circ\rho(y)-\rho(\alpha(y))\circ\rho(x).
\end{equation*}
for all $x,y \in \mathfrak{g}$.
\end{Def}
\begin{Exm}
For any integer s, we can define the $\alpha^s$-adjoint representation of the regular hom-Lie algebra $(\mathfrak{g},[-,-],\alpha)$ on $\mathfrak{g}$ by $(ad_s,\alpha)$, where
$ad_s(g)(h)=[\alpha^s(g),h]~~~\mbox{for all}~g,h\in \mathfrak{g}.$
\end{Exm}

\begin{Def}
A graded hom-Lie algebra is a triplet $(\mathfrak{g},[-,-],\alpha)$, where $\mathfrak{g}=\oplus_{i\in \mathbb{Z}_+}\mathfrak{g}_i$ is a graded module, $[-,-]:\mathfrak{g}\otimes\mathfrak{g}\rightarrow \mathfrak{g}$ is a graded skew-symmetric bilinear map of degree $-1$, and $\alpha:\mathfrak{g}\rightarrow \mathfrak{g}$ is a homomorphism of $(\mathfrak{g},[-,-])$ of degree $0$, satisfying:
$$(-1)^{(i-1)(k-1)}[\alpha(x),[y,z]]+(-1)^{(j-1)(i-1)}[\alpha(y),[z,x]]+(-1)^{(k-1)(i-1)}[\alpha(z),[x,y]]=0,$$ 
 for all $x\in \mathfrak{g}_i,~y\in \mathfrak{g}_j,$ and $z\in \mathfrak{g}_k$.
\end{Def}
\begin{Def}
Let $\mathfrak{A}=\oplus_{k\in\mathbb{Z}_+}\mathfrak{A}_k$ be a graded commutative algebra, $\sigma$ and $\tau$ be $0$-degree endomorphism of $\mathfrak{A}$, then a $(\sigma,\tau)$-differential graded commutative algebra is quadruple $(\mathfrak{A},\sigma,\tau,d)$, where $d$ is a degree $1$ square zero operator on $\mathfrak{A}$ satisfying the following:
\begin{enumerate}
\item $d\circ\sigma=\sigma\circ d,~ ~d\circ\tau=\tau\circ d;$
\item $d(ab)=d(a)\tau(b)+(-1)^{|a|}\sigma(a)d(b)$ for $a,b\in \mathfrak{A}$.
\end{enumerate}
 \end{Def}
\begin{Def}
A Gerstenhaber algebra is a triple  $(\mathcal{A}=\oplus_{i\in\mathbb{Z}_+}\mathcal{A}_i,\wedge,[-,-])$ where $\mathcal{A}$ is a graded commutative associative $R$-algebra, and $[-,-]:\mathcal{A}\otimes \mathcal{A}\rightarrow \mathcal{A}$ is a bilinear map of degree $-1$ such that:
\begin{enumerate}
\item $(\mathcal{A},[-,-])$ is a graded Lie algebra.
\item The following Leibniz rule holds:
      $$[X,Y\wedge Z]=[X,Y]\wedge Z+(-1)^{(i-1)j}Y\wedge [X,Z], $$
      for all $X\in \mathcal{A}_i,~Y\in \mathcal{A}_j,~Z\in \mathcal{A}_k$.
\end{enumerate}
\end{Def}
For a Gerstenhaber algebra $(\mathcal{A}=\oplus_{i\in\mathbb{Z}_+}\mathcal{A}_i,\wedge,[-,-])$, a linear operator  $D:\mathcal{A}\rightarrow \mathcal{A}$ of degree $-1$ is said to be a generator of the bracket $[-,-]$ if for every homogeneous $a,b \in \mathcal{A}$:
$$[a,b]=(-1)^{|a|}(D(ab)-(Da)b-(-1)^{|a|}a(Db));$$ 
\begin{Def}
A Gerstenhaber algebra $\mathcal{A}$ is called an \textbf{exact Gerstenhaber algebra} or a \textbf{Batalin-Vilkovisky algebra} if it has a generator $D$ of square zero. 
\end{Def}
\begin{Def}\label{DerS}
 Given an associative commutative algebra $A$, an $A$-module $M$ and an algebra endomorphism $\phi: A\rightarrow A$, we call a $R$-linear map $\delta: A\rightarrow M$ a $\phi$-derivation of $A$ into $M$ if it satisfies 
$$\delta(a.b)=\phi(a).\delta(b)+\phi(b).\delta(a)$$
for all $a,b \in A$. Let us denote the set of $\phi$-derivations by $Der_{\phi}(A)$.
\end{Def} 
\begin{Rem}
Given a manifold $M$ and a smooth map $\psi: M\rightarrow M $, then the space of $\psi^*$-derivations of $C^{\infty}(M)$ into $C^{\infty}(M)$ can be identified with the space of sections of the pull back bundle of TM, i.e. $\Gamma(\psi^!TM).$ 
\end{Rem}
\begin{Def}\label{hom-Lie}
 A hom-Lie algebroid is a quintuple $(A,\phi,[-,-],\rho,\alpha)$, where $A$ is a vector bundle over a smooth manifold $M$, $\phi:M\rightarrow M$ is a smooth map, $[-,-]:\Gamma(A)\times \Gamma(A)\rightarrow \Gamma(A)$ is a bilinear map, $\rho: \phi^{!}A\rightarrow \phi^!TM$ is a bundle map, called anchor, and $\alpha: \Gamma (A)\rightarrow\Gamma(A)$ is a linear map, such that:
\begin{enumerate}
\item $\alpha(f.X)=\phi^*(f).\alpha(X)$ for all $X\in \Gamma(A),~f\in C^{\infty}(M)$;
\item The triplet $(\Gamma(A),[-,-],\alpha)$ is a hom-Lie algebra;
\item The following hom-Leibniz identity holds:
$[X,fY]=\phi^*(f)[X,Y]+\rho(X)[f]\alpha(Y);$
for all $X,Y\in \Gamma(A),~f\in C^{\infty}(M)$;
\item The pair $(\rho,\phi^*)$ is a representation of the hom-Lie algebra $(\Gamma(A),[-,-],\alpha)$ on the space $C^{\infty}(M).$
\end{enumerate}
Here, $\rho(X)[f]$ stands for the function on $M$, such that $$\rho(X)[f](m)=\big<d_{\phi(m)}f,\rho_m(X_{\phi(m)}) \big>$$for $m\in M$. Here, $\rho_m:(\phi^!A)_m\cong A_{\phi(m)}\rightarrow (\phi^!TM)_m\cong T_{\phi(m)}M$ is the anchor map evaluated at $m\in M$ and $X_{\phi(m)}$ is the value of the section $X\in\Gamma (A)$ at $\phi(m)\in M$.

Moreover, a hom-Lie algebroid is said to be regular (or invertible) if the map $\alpha: \Gamma (A)\rightarrow\Gamma(A)$ is an invertible linear map and the map $\phi:M\rightarrow M $ is a diffeomorphism. 
\end{Def}

\begin{Def}\label{HGR}
A hom-Gerstenhaber algebra is a quadruple $(\mathcal{A}=\oplus_{i\in\mathbb{Z}_+}\mathcal{A}_i,\wedge,[-,-],\alpha)$ where $\mathcal{A}$ is a graded commutative associative $R$-algebra, $\alpha$ is an endomorphism of $(\mathcal{A},\wedge)$ of degree $0$ and $[-,-]:\mathcal{A}\otimes \mathcal{A}\rightarrow \mathcal{A}$ is a bilinear map of degree $-1$ such that:
\begin{enumerate}
\item the triple $(\mathcal{A},[-,-],\alpha)$ is a graded hom-Lie algebra.
\item the hom-Leibniz rule holds:
      $$[X,Y\wedge Z]=[X,Y]\wedge \alpha(Z)+(-1)^{(i-1)j}\alpha(Y)\wedge [X,Z], $$
      for all $X\in \mathcal{A}_i,~Y\in \mathcal{A}_j,~Z\in \mathcal{A}_k$.
\end{enumerate}
\end{Def}
\begin{Exm}\label{Ger}
Let $(\mathcal{A},[-,-],\wedge)$ be a Gerstenhaber algebra and $\alpha:(\mathcal{A},[-,-],\wedge)\rightarrow (\mathcal{A},[-,-],\wedge)$ be an endomorphism, then the quadruple $(\mathcal{A},\wedge,\alpha\circ [-,-],\alpha)$ is a hom-Gerstenhaber algebra.  
\end{Exm}

\begin{Exm}\label{Hom-Ger2}
Given a hom-Lie algebra $(\mathfrak{g},[-,-],\alpha)$, one can define a hom-Gerstenhaber algebra $(\mathfrak{G}=\wedge^*\mathfrak{g},\wedge,[-,-]_{\mathfrak{G}},\alpha_{\mathfrak{G}})$, where 
$$[x_1\wedge\cdots\wedge x_n,y_1\wedge \cdots\wedge y_m]_{\mathfrak{G}}=\sum_{i=1}^n\sum_{j=1}^m (-1)^{i+j}[x_i,y_j]\wedge \alpha_G(x_1\wedge\cdots \hat{x_i}\wedge\cdots\wedge x_n\wedge y_1\wedge \cdots \hat{y_j}\wedge\cdots\wedge y_m)$$
for all $x_1,\cdots,x_n,y_1,\cdots,y_m\in \mathfrak{g}$ and 
$$\alpha_{\mathfrak{G}}(x_1\wedge\cdots\wedge x_n)=\alpha(x_1)\wedge\cdots \wedge \alpha(x_n).$$
See \cite{hom-Lie} for further details.
\end{Exm}
\begin{Def}\label{hom-LR}
A hom-Lie Rinehart algebra over $(A,\phi)$  is a tuple $(A,L,[-,-],\phi,\alpha,\rho)$ ,
 where $A$ is an associative commutative algebra, $L$ is an $A$-module, $[-,-]:L\times L\rightarrow L$ is a skew symmetric bilinear map, $\phi:A\rightarrow A$ is an algebra homomorphism, $\alpha:L\rightarrow L $ is a linear map satisfying $\alpha([x,y])=[\alpha(x),\alpha(y)]$, and $\rho: L\rightarrow Der_{\phi}A$ such that 
\begin{enumerate}
\item The triple $(L,[-,-],\alpha)$ is a hom-Lie algebra.
\item $\alpha(a.x)=\phi(a).\alpha(x)$ for all $a\in A,~x\in L $.
\item $(\rho, \phi)$ is a representation of $(L,[~,~],\alpha)$ on $A$.
\item $\rho(a.x)=\phi(a).\rho(x)$ for all $a\in A,~x\in L $.
\item $[x,a.y]=\phi(a)[x,y]+ \rho(x)(a)\alpha(y)$ for all $a\in A,~x,y\in L $.
\end{enumerate}

A hom-Lie-Rinehart algebra $(A,L,[-,-],\phi,\alpha,\rho)$ is called regular if the map $\phi:A\rightarrow A$ is an algebra automorphism and the map $\alpha:L\rightarrow L $ is a bijection. 
\end{Def}
Let us denote a hom-Lie-Rinehart algebra $(A,L,[-,-],\phi,\alpha,\rho)$ simply by $(\mathcal{L},\alpha)$. In \cite{old}, we discussed several examples of hom-Lie-Rinehart algebras.

\section{Hom-Batalin-Vilkovisky algebras and hom-Lie-Rinehart algebras}

\subsection{\textbf{Hom-Batalin-Vilkovisky algebras}}
\begin{Def}
A hom-Gerstenhaber algebra $(\mathcal{A}=\oplus_{i\in\mathbb{Z}_+}\mathcal{A}_i,\wedge,[-,-],\alpha)$ is said to be generated by an operator $D:\mathcal{A}\rightarrow \mathcal{A}$ of degree $-1$ if $D\circ \alpha=\alpha\circ D$, and 

\begin{equation}\label{generator}
[X,Y]=(-1)^{|X|}(D(XY)-(DX)\alpha(Y)-(-1)^{|X|}\alpha(X)(DY));
\end{equation}
for homogeneous elements $X,Y\in \mathcal{A}$. Furthermore, if $D^2=0$, then $D$ is called an exact generator and hom-Gerstenhaber algebra $\mathcal{A}$ is called an exact hom-Gerstenhaber algebra or hom-Batalin-Vilkovisky algebra.  

\end{Def}  

\begin{Exm}\label{BV1}
Recall from Example \ref{Hom-Ger2} that there is a canonical hom-Gerstenhaber algebra structure on the exterior algebra $(\mathfrak{G}=\wedge^*\mathfrak{g},\wedge,[-,-]_{\mathfrak{G}},\alpha_{\mathfrak{G}})$ associated to a hom-Lie algebra $(\mathfrak{g},[-,-],\alpha)$. If we consider the boundary operator $d:\wedge^n\mathfrak{g}\rightarrow \wedge^{n-1}\mathfrak{g}$ of a hom-Lie algebra with coefficients in the trivial module $R$ (defined in \cite{HALG2}), given by 
$$d(x_1\wedge\cdots\wedge x_n)=\sum_{1\leq i<j\leq n} (-1)^{i+j}[x_i,x_j]\wedge \alpha_G(x_1\wedge\cdots \hat{x_i}\wedge\cdots\wedge \hat{x_j}\wedge\cdots\wedge x_n)$$
for all $x_1,\cdots,x_n\in \mathfrak{g}$. Then, $d:\mathfrak{G}\rightarrow \mathfrak{G}$ is a map of degree $-1$ such that $d^2=0$, and $d\circ \alpha=\alpha\circ d$. More importantly, this operator $d$ generates the graded hom-Lie bracket $[-,-]_{\mathfrak{G}}$ in the following way:

\begin{equation}
[X,Y]_{\mathfrak{G}}=(-1)^{|X|}(d(XY)-(dX)\alpha_{\mathfrak{G}}(Y)-(-1)^{|X|}\alpha_{\mathfrak{G}}(X)(dY))
\end{equation}  
for $X,Y\in \mathfrak{G}$. Hence the operator $d$ is an exact generator of the hom-Gerstenhaber algebra $\mathfrak{G}$. This yields a \textbf{\it hom-BV algebra} associated to a hom-Lie algebra.
\end{Exm}

\begin{Exm}\label{Ger2}
Given a Batalin-Vilkovisky algebra $(\mathcal{A},[-,-],\wedge,\partial)$ and an endomorphism $\alpha:(\mathcal{A},[-,-],\wedge)\rightarrow (\mathcal{A},[-,-],\wedge)$ of the underlying Gerstenhaber algebra satisfying $\partial\circ \alpha=\alpha\circ \partial$, then the quadruple $(\mathcal{A},\wedge,[-,-]_{\alpha}=\alpha\circ [-,-],\alpha)$ is a hom-Gerstenhaber algebra. Since for any homogeneous elements $a,b \in \mathcal{A}$, we have
$$[a,b]=(-1)^{|a|}(\partial(ab)-\partial(a)b-(-1)^{|a|}a\partial(b)),$$
so by applying $\alpha$ on both sides we obtain the following relation:
$$[a,b]_{\alpha}=(-1)^{|a|}(\partial_{\alpha}(ab)-\partial_{\alpha}(a)\alpha(b)-(-1)^{|a|}\alpha(a)\partial_{\alpha}(b)),$$
i.e. $\partial_{\alpha}=\alpha\circ\partial$ generates the hom-Gerstenhaber bracket $[-,-]_{\alpha}$.  
\end{Exm}

\subsection{\textbf{Homology of a hom-Lie-Rinehart algebra}}

Let $A$ be an associative and commutative $R$-algebra, and $\phi$ be an algebra homomorphism of $A$. Suppose $ (\mathcal{L},\alpha) $ is a hom-Lie-Rinehart algebra over the pair $(A, \phi)$.

\begin{Def}
Let $M$ be an $A$-module and $\beta\in End_{R}(M)$. Then the pair $(M,\beta)$ is a right module over a hom-Lie Rinehart algebra $(\mathcal{L},\alpha)$ if the following conditions hold.
\begin{enumerate}
\item There is a map $\theta:M\otimes L\rightarrow M$ such that the pair  $(\theta,\beta)$ is a representation of the hom-Lie algebra $(L,[-,-],\alpha)$ on $M$, where  $\theta(m,x)$ is usually denoted by $\{m,x\}$ for $x\in L,~m\in M$.
\item $\beta(a.m)=\phi(a).\beta(m)$ for $a \in A$ and $m \in M$.
\item $\{a.m,x\}=\{m,a.x\}=\phi(a).\{m,x\}-\rho(x)(a).\beta(m)$ for $a\in A,~x\in L,~m\in M$.
\end{enumerate} 
\end{Def}
If $\alpha=Id_L$ and $\beta =Id_M$, then M is a right Lie-Rinehart algebra module. Note that there is no canonical right module structure on $(A,\phi)$ as one would expect from the case of Lie-Rinehart algebras.

Let $(M,\beta)$ be a right module over a hom-Lie-Rinehart algebra $(\mathcal{L},\alpha)$. Define $C_n(\mathcal{L},M):=M\otimes_A \wedge_A^n L,$
for $n\geq 0$. Also define the boundary map $d:C_n(\mathcal{L},M)\rightarrow C_{n-1}(\mathcal{L},M)$ by
\begin{equation}\label{bound}
\begin{split}
d(m\otimes (x_1\otimes\cdots\otimes x_n))&=\sum_{i=1}^n (-1)^{i+1}\{m,x_i\}\otimes(\alpha(x_1)\otimes\cdots\otimes\hat{\alpha(x_i)}\otimes\cdots\otimes \alpha(x_n))+\\
&\sum_{i<j}(-1)^{i+j} \beta(m)\otimes([x_i,x_j],\alpha(x_1)\otimes\cdots\otimes\hat{\alpha(x_i)}\otimes\cdots\otimes\hat{\alpha(x_j)}\otimes\cdots\otimes \alpha(x_n))
\end{split}
\end{equation}
for $m \in M$ and $x_1,\cdots,x_n \in L$.
By the definition of the right hom-Lie-Rinehart algebra module structure on $(M,\beta)$, it follows that $d^2=0$. Thus, $(C_*(\mathcal{L},M),d)$ forms a chain complex. The homology of the hom-Lie-Rinehart algebra $(\mathcal{L},\alpha)$ with coefficient in the right module $(M,\beta)$ is given  by
$$H_*^{hLR}(L,M):= H_*(C_*(\mathcal{L},M)).$$ 

\begin{Rem}
If $\alpha=Id_L$ and $\beta=Id_M$, then $M$ is a right Lie-Rinehart algebra module and $H_*^{hLR}(L,M)$ is the Lie-Rinehart algebra homology with coefficients in $M$. 
\end{Rem}

\begin{Rem}
If $A=R$ then $(\theta,\beta)$ is a representation of hom-Lie algebra $(L,[-,-],\alpha)$ on $M$ and $H_*^{hLR}(L,M)$ is the homology of a hom-Lie algebra with coefficients in $M$, which is defined in \cite{HALG2}. 
\end{Rem}

Let $(\mathcal{L},\alpha)$ be a hom-Lie-Rinehart algebra, then there is no canonical right module structure on $(A,\phi)$. In the following theorem we give a bijective correspondence between the exact generators of hom-Gerstenhaber bracket on $\wedge^*_AL$ and right module structures on $(A,\phi)$.

\begin{Thm}\label{Corres1}
There is a bijective correspondence between right $(\mathcal{L},\alpha)$-module structures on $(A,\phi)$ and exact generators of the hom-Gerstenhaber algebra bracket on $\wedge^*_A L$.

\begin{proof}
Let $(A,\phi)$ be a right module structure over hom-Lie-Rinehart algebra $(\mathcal{L},\alpha)$, then we can define an operator $D:\wedge^*_AL\rightarrow \wedge^*_AL$ as follows:
\begin{equation}
\begin{split}
D(x_1\wedge\cdots\wedge x_n)=&\sum_{i=1}^n (-1)^{i+1}\{1,x_i\}.(\alpha(x_1)\wedge\cdots\wedge\hat{\alpha(x_i)}\wedge\cdots\wedge \alpha(x_n))\\
&+\sum_{i<j}(-1)^{i+j} ([x_i,x_j]\wedge\alpha(x_1)\wedge\cdots\wedge\hat{\alpha(x_i)}\wedge\cdots\wedge\hat{\alpha(x_j)}\wedge\cdots\wedge \alpha(x_n))
\end{split}
\end{equation}  

where $\{-,-\}$ denotes the right action of $L$ on $A$. It follows from straightforward calculations that $D$ commutes with $\alpha$ and generates the bracket on $\wedge_A^*L$, i.e. 
$$[X,Y]=(-1)^{|X|}(D(XY)-(DX)\alpha(Y)-(-1)^{|X|}\alpha(X)(DY));$$
for any homogeneous elements $X,Y\in \wedge_A^*L$.

Conversely, Let $D:\wedge^*_AL\rightarrow \wedge^*_AL$ generates the hom-Gerstenhaber bracket on $\wedge^*_AL$ and $D^2=0$. Then define the right action of $L$ on $A$ as follows
\begin{align*}
\{ab,x\}&=\phi(ab)(D(x))-\rho(x)(ab)\\
&=\phi(a)(\phi(b)(D(x))-\rho(x)(b))-\phi(b)\rho(x)(a)\\
& =\phi(a)(\{b,x\})-\phi(b)\rho(x)(a).
\end{align*}
Here, $\rho$ is the anchor map of the hom-Lie-Rinehart algebra $(\mathcal{L},\alpha)$. Furthermore, using equation \eqref{generator} we have the following:
\begin{align*}
\{b,ax\}&=\phi(b)(D(ax))-\rho(ax)(b)\\
&=\phi(b)(\phi(a)(D(x))-\rho(x)(a))-\phi(a)\rho(x)(b)\\
& =\phi(a)(\{b,x\})-\phi(b)\rho(x)(a).
\end{align*}
  
  Let us define $\theta:A\otimes L\rightarrow A$ as $\theta(a,x)=\{a,x\}$, then the conditions: $D^2=0$, and $D\circ \alpha=\alpha\circ D$ implies that $(\theta,\phi)$ is a representation of the hom-Lie algebra $(L,[-,-],\alpha)$ on $A$. Hence, an exact generator gives a right $(\mathcal{L},\alpha)$-module structure on the pair $(A,\phi)$.   

\end{proof}
\end{Thm}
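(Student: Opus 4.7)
The claim is a hom-analogue of Huebschmann's bijection between right Lie-Rinehart module structures on the base algebra $A$ and BV-generators of the Gerstenhaber bracket on $\wedge_A^*L$. My plan is to make the two constructions indicated in the excerpt explicit and then verify they are mutually inverse, paying careful attention to every $\alpha$- and $\phi$-twist.

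\emph{Forward direction.} Given a right $(\mathcal{L},\alpha)$-module structure on $(A,\phi)$, I would define the degree $-1$ operator $D\colon\wedge_A^*L\to\wedge_A^*L$ by the Chevalley--Eilenberg--Rinehart type formula written in the statement, with the scalars $\{1,x_i\}\in A$. First the formula must descend to the $A$-balanced tensor products forming $\wedge_A^nL$; this follows from the right module axiom $\{a\cdot m,x\}=\phi(a)\{m,x\}-\rho(x)(a)\phi(m)$ applied with $m=1$, together with the hom-Leibniz identity (axiom~(5) of Definition~\ref{hom-LR}). The commutation $D\circ\alpha=\alpha\circ D$ is immediate from $\beta(a\cdot m)=\phi(a)\beta(m)$ combined with $\alpha[x,y]=[\alpha(x),\alpha(y)]$. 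The main task is $D^2=0$, which I would reduce, as in the classical case, to the hom-Jacobi identity on $L$ together with the two representation identities applied to $(\rho,\phi)$ and to $(\{-,-\},\phi)$ on $A$. Finally, that $D$ generates the hom-Gerstenhaber bracket reduces, by induction on the bidegree using the graded hom-Leibniz rule of Definition~\ref{HGR}, to the case of bidegree $(1,q)$, which I would verify by matching the coefficients of $\{1,x_i\}$ and of $[x_i,y_j]$ in the bracket formula inherited from Example~\ref{Hom-Ger2}.

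\emph{Backward direction.} Conversely, given an exact generator $D$, I would define the action of $L$ on $A$ by $\{a,x\}:=\phi(a)D(x)-\rho(x)(a)$, equivalently by taking $\{1,x\}=D(x)$ and extending $A$-equivariantly. The two displayed computations in the excerpt already verify the module axiom $\{a\cdot m,x\}=\{m,a\cdot x\}=\phi(a)\{m,x\}-\rho(x)(a)\phi(m)$ (with $\beta=\phi$). It remains to show that $(\{-,-\},\phi)$ is a representation of the hom-Lie algebra $(L,[-,-],\alpha)$ on $A$. The intertwining with $\alpha$ is a direct consequence of $D\circ\alpha=\alpha\circ D$. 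The key Jacobi-type identity I plan to obtain by evaluating $D\circ D$ on an element $a\cdot(x\wedge y)\in\wedge_A^2L$: using the bidegree $(1,1)$ generator formula~(\ref{generator}) one first writes $D(x\wedge y)=D(x)\alpha(y)-D(y)\alpha(x)-[x,y]$, and then $D^2(a\cdot(x\wedge y))=0$ unfolds precisely into the representation identity $\{\phi(a),[x,y]\}=\{\{a,y\},\alpha(x)\}-\{\{a,x\},\alpha(y)\}$.

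\emph{Bijectivity and main obstacle.} The two constructions are mutually inverse essentially because each is determined by its effect on $L=\wedge_A^1L$: the forward direction produces $D(x)=\{1,x\}$ on generators, while the backward direction reads off $\{1,x\}=D(x)$; a uniqueness argument using that any two exact generators of the same bracket that commute with $\alpha$ and agree on $L$ must coincide on all of $\wedge_A^*L$ (extend by the generator equation on wedges) then closes the loop. The main obstacle I anticipate is the $D^2=0$ verification in the forward direction: every summand in $D$ carries one $\alpha$, so a second application of $D$ produces a proliferation of terms involving $\alpha^2$-translates of brackets and of the right action whose pairwise cancellation has to be engineered from the hom-Jacobi identity together with both representation identities and the hom-Leibniz rule; this parallels and extends the verification of squaring-to-zero for the hom-Chevalley--Eilenberg differential in \cite{HALG2}.
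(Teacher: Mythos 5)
Your proposal is correct and follows essentially the same route as the paper: the same Chevalley--Eilenberg--Rinehart formula for $D$ in the forward direction, the same definition $\{a,x\}=\phi(a)D(x)-\rho(x)(a)$ in the backward direction, with the verifications ($D^2=0$ from hom-Jacobi plus the representation identities, the representation identity from unfolding $D^2=0$ in bidegree $(1,1)$) being exactly the ``straightforward calculations'' the paper leaves implicit. Your added remarks on well-definedness over the $A$-balanced product and on why the two constructions are mutually inverse (both determined by their restriction to $L$) are welcome details the paper omits.
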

Suppose the canonical hom-Gerstenhaber algebra structure on $\wedge^*_A L$ corresponding to the hom-Lie-Rinehart algebra $(\mathcal{L},\alpha)$, has an exact generator $D$. Then by the above Theorem \ref{Corres1}, we get the corresponding right $(\mathcal{L},\alpha)$-module structure on $(A,\phi)$. Moreover, by the canonical isomorphism of $A$-modules: $A\otimes_A \wedge_A^n L\cong \wedge_A^n L$, and the definition of the boundary from equation (\ref{bound}), we have the following result:

\begin{Prop}
The homology $H_*^{hLR}(\mathcal{L},A)$ of hom-Lie-Rinehart algebra $(\mathcal{L},\alpha)$ with coefficients in right $(\mathcal{L},\alpha)$-module $(A,\phi)$ is isomorphic to the homology of the chain complex $(\wedge^*_A L,D)$.
\end{Prop}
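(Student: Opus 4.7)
The plan is to build an explicit chain isomorphism $\Phi_\bullet \colon (C_*(\mathcal{L}, A), d) \to (\wedge^*_A L, D)$ and read off the isomorphism on homology. For each $n \geq 0$, I would take $\Phi_n \colon A \otimes_A \wedge^n_A L \to \wedge^n_A L$ to be the canonical $A$-module isomorphism $a \otimes \omega \mapsto a\omega$, with inverse $\omega \mapsto 1 \otimes \omega$. Since both $d$ and $D$ are $R$-linear and since the elements $1 \otimes (x_1 \wedge \cdots \wedge x_n)$ $R$-span $A \otimes_A \wedge^n_A L$, it suffices to verify the identity $\Phi_{n-1} \circ d = D \circ \Phi_n$ on such generators.

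For that verification, I would specialize formula \eqref{bound} with $m = 1 \in A$, using $\beta = \phi$ and $\phi(1) = 1$. This yields an anchor-type sum $\sum_i (-1)^{i+1} \{1, x_i\} \otimes \alpha(x_1) \wedge \cdots \widehat{\alpha(x_i)} \cdots \wedge \alpha(x_n)$ plus a bracket-type sum $\sum_{i<j}(-1)^{i+j}\, 1 \otimes [x_i, x_j] \wedge \alpha(x_1) \wedge \cdots \widehat{\alpha(x_i)} \cdots \widehat{\alpha(x_j)} \cdots \wedge \alpha(x_n)$. Applying $\Phi_{n-1}$ absorbs the $A$-factor into the wedge and reproduces term-by-term the formula defining $D$ in the proof of Theorem \ref{Corres1}. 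Thus $\Phi_\bullet$ is a chain map, and being bijective in each degree, is a chain isomorphism; passing to homology yields $H_*^{hLR}(\mathcal{L}, A) \cong H_*(\wedge^*_A L, D)$.

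The only real obstacle is the bookkeeping of signs and indices in the comparison above; no further structural argument is required. The proposition is essentially a repackaging of Theorem \ref{Corres1}, which already identifies the right action $\{a, x\}$ with the expression $\phi(a)D(x) - \rho(x)(a)$, via the canonical identification $A \otimes_A \wedge^n_A L \cong \wedge^n_A L$. One minor sanity check worth noting is well-definedness of $d$ on the tensor product over $A$, which is built into the definition of a right $(\mathcal{L},\alpha)$-module through the relation $\{am, x\} = \{m, ax\}$; this guarantees that the computation performed on representatives of the form $1 \otimes \omega$ correctly describes $d$ on all of $C_n(\mathcal{L}, A)$.
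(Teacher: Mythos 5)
Your proposal is correct and follows exactly the route the paper takes: the paper's entire argument is the remark preceding the proposition, namely that the canonical isomorphism $A\otimes_A\wedge^n_A L\cong\wedge^n_A L$ together with the definition of the boundary in equation \eqref{bound} (specialized to $m=1$, $\beta=\phi$) identifies $d$ with the generator $D$ from Theorem \ref{Corres1}. You have simply written out the term-by-term verification that the paper leaves implicit.
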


\begin{Def}
Let $M$ be an $A$-module, and $\beta\in End_{R}(M)$. Then the pair $(M,\beta)$ is a left-module over the hom-Lie Rinehart algebra $(\mathcal{L},\alpha)$ if following conditions hold.
\begin{enumerate}
\item There is a map  $\theta:L\otimes M\rightarrow M$, such that the pair
$(\theta,\beta)$ is a representation of the hom-Lie algebra  $(L,[-,-],\alpha)$ on $M$. 
 We denote $\theta(X)(m)$ by $\{X,m\}$ for $X \in L$ and $m \in M$.
\item $\beta(a.m)=\phi(a).\beta(m)$ for $a \in A $ and $m \in M$.
\item $\{a.X, m\}=\phi(a)\{X,m\}$ for  $a\in A,~X\in L,~m\in M$.
\item $\{X, a.m\}=\phi(a)\{X,m\}+\rho(X)(a).\beta(m)$ for $X\in L,~a\in A,~m\in M$.
\end{enumerate} 
\end{Def}

If we consider $\alpha=Id_L$, and $\beta=Id_M$, then the hom-Lie-Rinehart algebra $(\mathcal{L},\alpha)$ turns out to be simply a Lie-Rinehart algebra and any left $(\mathcal{L},\alpha)$-module $(M,\beta)$ is just a Lie-Rinehart algebra module.

\begin{Exm}
Suppose $ (\mathcal{L},\alpha) $ is a hom-Lie-Rinehart algebra over $(A,\phi)$. Then $(A,\phi)$ is a left $(\mathcal{L},\alpha)$-module, where left action is given by the anchor map. 
\end{Exm}

\subsection{\textbf{Regular Hom-Lie-Rinehart algebras}}
Let $(\mathcal{L},\alpha)$ be a regular hom-Lie-Rinehart algebra over $(A,\phi)$ and $(M,\beta)$ be a left $(\mathcal{L},\alpha)$-module. Let $\theta:L\times M\rightarrow M$ be the left action of $L$ on $M$, then define a cochain complex $$(Alt_A(\mathcal{L},M)=\oplus_{n\geq 0}Alt_A^n(\mathcal{L},M),\delta),$$ 
where $Alt_A^n(\mathcal{L},M):= Hom_A(\wedge_A^n L,M)$, and the coboundary map $\delta:Alt_A^n(\mathcal{L},M)\rightarrow Alt_A^{n+1}(\mathcal{L},M)$ is defined as follows:
\begin{equation}
\begin{split}
\delta f(x_1,\cdots,x_{n+1})= &\sum_{i=1}^{n+1}(-1)^{i+1}\theta(\alpha^{-1}(x_i))(f(\alpha^{-1}(x_1),\cdots,\hat{x_i},\cdots,\alpha^{-1}(x_{n+1}))\\&+\sum_{1\leq i<j\leq n+1}(-1)^{i+j}\beta(f(\alpha^{-2}([x_i,x_j]),\alpha^{-1}(x_1),\cdots,\hat{x_i},\cdots,\hat{x_j},\cdots,\alpha^{-1}(x_{n+1}))
\end{split}
\end{equation}
for $f\in Alt_A^n(\mathcal{L},M),~\mbox{and}~ x_i\in L, ~\mbox{for}~ 1\leq i\leq n+1$.

\begin{Prop}
If $f\in Alt_A^n(\mathcal{L},M)$, then $\delta f \in Alt_A^{n+1}(\mathcal{L},M)$ and $\delta^2=0$. 
\end{Prop}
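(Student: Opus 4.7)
The plan is to verify the two assertions separately: first that $\delta f$ is $A$-multilinear and alternating (so that it lies in $Hom_A(\wedge_A^{n+1}L, M)$), and second that $\delta \circ \delta = 0$.

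For the well-definedness, the alternating property follows immediately from the Koszul signs in the formula together with the graded skew-symmetry of the hom-Lie bracket, so only $A$-linearity in each slot needs checking. I would substitute $a \cdot x_k$ for $x_k$ in a fixed slot and confirm that $\delta f(x_1, \ldots, a \cdot x_k, \ldots, x_{n+1})$ agrees with $\phi(a) \cdot \delta f(x_1, \ldots, x_{n+1})$ up to the natural $\phi$-twist. The nontrivial correction terms come from two sources: the hom-Leibniz axiom $[x_i, a \cdot x_j] = \phi(a)[x_i, x_j] + \rho(x_i)(a)\, \alpha(x_j)$ applied inside the bracket piece of $\delta$, and the left-module axiom $\{x, a \cdot m\} = \phi(a)\{x,m\} + \rho(x)(a)\, \beta(m)$ applied inside the $\theta$-piece after the identity $\alpha^{-1}(a \cdot x) = \phi^{-1}(a)\, \alpha^{-1}(x)$ (which is immediate from regularity). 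These correction terms carry matching factors $\rho(\alpha^{-1}(x_i))(a)$ with compatible signs appearing in different summands, and they cancel pairwise after invoking the $A$-linearity of $f$.

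For the coboundary square, I would expand $(\delta\delta f)(x_1, \ldots, x_{n+2})$ and sort into four families: ``$\theta$-$\theta$'' terms of shape $\{\alpha^{-1}(x_i),\{\alpha^{-2}(x_j), f(\cdots)\}\}$; ``$\theta$-bracket'' terms $\{\alpha^{-1}(x_i), \beta f(\alpha^{-2}([x_j,x_k]),\cdots)\}$; ``bracket-$\theta$'' terms $\beta\{\alpha^{-2}([x_i,x_j]), f(\cdots)\}$; and ``bracket-bracket'' terms of shape $\beta^2 f(\alpha^{-2}([[x_i,x_j],\alpha^{-1}(x_k)]),\cdots)$. The antisymmetrised ``$\theta$-$\theta$'' sum combines with the ``bracket-$\theta$'' sum and is annihilated by the representation axioms
$$\theta([x,y])\circ \beta = \theta(\alpha(x))\circ\theta(y) - \theta(\alpha(y))\circ\theta(x), \qquad \theta(\alpha(x))\circ \beta = \beta\circ \theta(x),$$
the second identity being required to push a $\beta$ past a $\theta$. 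Similarly, the ``$\theta$-bracket'' sum is absorbed into the bracket-$\theta$ cancellation after relabelling. What remains is the ``bracket-bracket'' family, which after cyclically summing over triples $(x_i,x_j,x_k)$ vanishes by the graded hom-Jacobi identity for $(L,[-,-],\alpha)$.

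The main obstacle is the combinatorial bookkeeping of the $\alpha^{-1}$- and $\alpha^{-2}$-twists. Every application of a representation or hom-Jacobi identity requires rewriting an expression such as $\alpha^{-2}([[x_i,x_j],\alpha^{-1}(x_k)])$ into the canonical form $[[\alpha^{-2}(x_i),\alpha^{-2}(x_j)],\alpha^{-2}(x_k)]$, which in turn uses that $\alpha^{-1}$ is a hom-Lie morphism together with the compatibility relations among $\alpha$, $\beta$, $\phi$, and $\rho$ furnished by the representation axioms. Once these commutation relations are systematically absorbed, the calculation reduces to the familiar Chevalley--Eilenberg pattern for Lie--Rinehart cohomology, now performed in the $\alpha$-twisted setting.
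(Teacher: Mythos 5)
Your proposal is correct and follows essentially the same route as the paper, whose own proof is only a two-sentence sketch asserting that everything follows from the representation axioms for $(\rho,\phi)$ and the left-module axioms for $(M,\beta)$; your four term families and the final appeal to the hom-Jacobi identity are exactly the computation being alluded to. One small normalization point: membership in $Alt_A^{n+1}(\mathcal{L},M)=Hom_A(\wedge_A^{n+1}L,M)$ requires the untwisted identity $\delta f(x_1,\dots,a\cdot x_k,\dots,x_{n+1})=a\cdot\delta f(x_1,\dots,x_{n+1})$ rather than a $\phi(a)$-scaling, and the $\alpha^{-1}$-twists in the formula, via $\alpha^{-1}(a\cdot x)=\phi^{-1}(a)\,\alpha^{-1}(x)$ and $\rho(\alpha^{-1}(x))(\phi^{-1}(a))=\phi^{-1}(\rho(x)(a))$, are precisely what converts the would-be $\phi(a)$ back to $a$, with the correction terms cancelling as you describe.
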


\begin{proof}
Here, we need to check that for any $f\in Hom_A(\wedge_A^n L,M)$, 
$\delta f\in Hom_A(\wedge_A^{n+1} L,M) $ and $\delta^2=0$. This follows using the fact that $(\rho,\phi)$ is a representation of the hom-Lie algebra $(L,[-,-],\alpha)$ on $A$ and $(M,\beta)$ is a left $(\mathcal{L},\alpha)$-module.
\end{proof}
In view of this proposition we obtain that $(Alt_A(\mathcal{L},M),\delta)$ is a cochain complex. We denote the cohomology space of the cochain complex $(Alt_A(\mathcal{L},M),\delta)$ by $H^*_{hR}(\mathcal{L},M)$.

In particular, if we consider a regular hom-Lie algebra simply as a hom-Lie-Rinehart algebra, then the above cochain complex with coefficients in a hom-Lie algebra module is same as the cochain complex mentioned in Section 3 of \cite{hom-big}. 

Moreover, a hom-Lie algebroid is a particular case of hom-Lie-Rinehart algebras. Thus by following the above discussion, we define representations of a hom-Lie algebroid, and subsequently define a cohomology of a hom-Lie algebroid with coefficients in a representation in the next section. 
\begin{Exm} 
If $\alpha=Id_L$, then the hom-Lie- Rinehart algebra $(\mathcal{L},\alpha)$ is simply a Lie-Rinehart algebra $L$ over $A$ and the algebra $A$ is a Lie-Rinehart algebra module over $L$. Also, the cohomology $H^*_{hR}(\mathcal{L}, M)$ is same as the Lie-Rinehart algebra cohomology with coefficients in the module $A$.
\end{Exm}

\begin{Exm} Let $M$ be a smooth manifold, $A=C^{\infty}(M)$ and $L=\chi(M)$- the space of smooth vector fields on $M$. Let $\alpha=Id_L$, then the cohomology $H^*_{hR}(L,A)$ is the de-Rham cohomology of $M$.
\end{Exm}

Let $(\mathcal{L},\alpha)$ be a regular hom-Lie-Rinehart algebra over $(A,\phi)$. Then the pair $(A,\phi)$ is a left $(\mathcal{L},\alpha)$-module. Consider the cochain complex $(Alt_A(\mathcal{L},A),\delta)$ of the hom-Lie-Rinehart algebra $(\mathcal{L},\alpha)$ with coefficients in $(A,\phi)$. Now, define a multiplication $\wedge:Alt_A(\mathcal{L},A)\otimes_R Alt_A(\mathcal{L},A)\rightarrow Alt_A(\mathcal{L},A)$
$$(\xi\wedge\eta)(x_1,\cdots,x_{p+q})=\sum_{\sigma\in S(p,q)}sgn(\sigma)(\xi(x_{\sigma(1)},\cdots,x_{\sigma(p)}).\eta(x_{\sigma(p+1)},\cdots,x_{\sigma(p+q)})).$$
for $x_1,\cdots,x_{p+q}\in L$, $\xi\in Alt_A^p(\mathcal{L},A)$, and $\eta\in Alt_A^q(\mathcal{L},A)$. Here, $S(p,q)$ denotes the set of all $(p,q)$-shuffles in the symmetric group $S_{p+q}$. Also, define $\Phi:Alt_A(\mathcal{L},A)\rightarrow Alt_A(\mathcal{L},A)$ as follows
$$\Phi(\xi)(x_1,\cdots,x_p)=\phi(\xi(\alpha^{-1}(x_1),\cdots,\alpha^{-1}(x_p)))$$
for $x_1,\cdots,x_{p}\in L$, and $\xi\in Alt_A^p(\mathcal{L},A)$. The multiplication $\wedge$ makes the chain complex $Alt_A(\mathcal{L},A)$, a graded commutative algebra since $A$ is a commutative algebra. Next, we have the following result:
 
\begin{Lem}\label{lemhlr}
Let $(\mathcal{L},A)$ be a regular hom-Lie-Rinehart algebra, where the underlying $A$-module $L$ is projective of rank $n$. Then the pair $(\wedge^n_A L^*,\Phi)$ is a right $(\mathcal{L},\alpha)$-module.
\end{Lem}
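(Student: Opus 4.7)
The plan is to exhibit an explicit right action of $L$ on $\wedge_A^n L^*$ modelled on a hom-Lie derivative, then verify the three axioms of the right-module definition. A key preliminary observation is that, since $L$ is $A$-projective of rank $n$, one has $\wedge_A^{n+1} L = 0$, and consequently the cochain group $Alt_A^{n+1}(\mathcal{L}, A)$ vanishes. In particular the coboundary $\delta$ from the previous subsection acts as zero on $Alt_A^n(\mathcal{L}, A) \cong \wedge_A^n L^*$, and the resulting identity $\delta\omega = 0$ for every top-degree form is what will eventually power the representation axiom.

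For $\omega \in \wedge_A^n L^*$ and $x \in L$, define
\[
\{\omega, x\}(y_1, \ldots, y_n) := \rho(\alpha^{-1}(x))\bigl(\omega(\alpha^{-1}(y_1), \ldots, \alpha^{-1}(y_n))\bigr) - \sum_{i=1}^{n} \phi\bigl(\omega(\alpha^{-1}(y_1), \ldots, \alpha^{-2}[x, y_i], \ldots, \alpha^{-1}(y_n))\bigr),
\]
a hom-analogue of the classical Lie derivative $L_x \omega$. The first check is that $\{\omega, x\}$ is again $A$-multilinear and alternating, hence lies in $\wedge_A^n L^*$; this reduces to the hom-Leibniz rule $[x, a \cdot y] = \phi(a)[x,y] + \rho(x)(a)\alpha(y)$ together with the fact that $\rho(x)$ is a $\phi$-derivation of $A$. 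Condition (2) of the right-module definition follows at once from the definition of $\Phi$, and the two equalities constituting condition (3), namely $\{a \cdot \omega, x\} = \{\omega, a \cdot x\} = \phi(a)\{\omega, x\} - \rho(x)(a) \Phi(\omega)$, unwind into identities that follow from $\rho(a \cdot x) = \phi(a) \rho(x)$, the hom-Leibniz rule, and the $\phi$-derivation property of the anchor.

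It remains to check that the pair $(\theta, \Phi)$, where $\theta(x)(\omega) := \{\omega, x\}$, is a representation of the hom-Lie algebra $(L, [-,-], \alpha)$ on $\wedge_A^n L^*$. The compatibility $\theta(\alpha(x)) \circ \Phi = \Phi \circ \theta(x)$ is routine bookkeeping using only that $\alpha, \phi$ are invertible and that $\Phi$ conjugates evaluation by $\alpha^{-1}$ and $\phi$. The main obstacle is the second identity,
\[
\theta([x, y]) \circ \Phi = \theta(\alpha(x)) \circ \theta(y) - \theta(\alpha(y)) \circ \theta(x),
\]
which is the hom-analogue of $L_{[x,y]} = [L_x, L_y]$. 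Expanding both sides against $(y_1, \ldots, y_n)$ produces terms of three kinds: double $\rho$-terms, mixed $\rho$-bracket terms, and double-bracket terms. The double $\rho$-terms cancel by the hom-representation identity $\rho([x, y]) \phi = \rho(\alpha(x)) \rho(y) - \rho(\alpha(y)) \rho(x)$ on $A$; the mixed terms collapse using the hom-Leibniz rule, the multiplicativity $\alpha[x, y] = [\alpha(x), \alpha(y)]$, and $\alpha(a \cdot x) = \phi(a) \alpha(x)$; the residual double-bracket terms cancel via the hom-Jacobi identity on $L$, after reindexing and invoking the vanishing $\delta\omega = 0$ in top degree to dispose of the leftover alternating sum. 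Since $\alpha$ and $\phi$ are bijective by regularity, the numerous $\alpha^{-1}$ and $\alpha^{-2}$ factors cause no conceptual difficulty beyond careful indexing.
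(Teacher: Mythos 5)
Your overall strategy --- realizing the right action as a hom-Lie derivative on top-degree forms and exploiting the vanishing of $Alt_A^{n+1}(\mathcal{L},A)$ --- is the same as the paper's, but the explicit formula you propose for $\{\omega,x\}$ does not satisfy axiom (3) of the right-module definition, so the step you describe as ``unwinding into identities'' fails. Concretely, since $\rho(\alpha^{-1}(x))$ is a $\phi$-derivation of $A$, your formula gives
\[
\{a\cdot\omega,x\}=\phi(a)\{\omega,x\}+\rho(\alpha^{-1}(x))(a)\,\Phi(\omega),
\]
whereas the axiom demands $\phi(a)\{\omega,x\}-\rho(x)(a)\,\Phi(\omega)$: the cross term has the wrong sign (you have defined $+L_x$ where the right-module structure requires $-L_x$, exactly as in the classical Lie--Rinehart case) and the wrong twist ($\rho(\alpha^{-1}(x))$ in place of $\rho(x)$). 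The companion identity $\{a\cdot\omega,x\}=\{\omega,a\cdot x\}$ also fails: since $\alpha^{-1}(a\cdot x)=\phi^{-1}(a)\,\alpha^{-1}(x)$ and $\rho(\phi^{-1}(a)\,z)=a\,\rho(z)$, the leading term of $\{\omega,a\cdot x\}$ scales by $a$ rather than by $\phi(a)$. So the defect is not a single absorbable sign; the placement of the powers of $\alpha$ is inconsistent with the axioms, and no amount of reindexing in the representation identity will repair a definition that already fails $A$-linearity in the module variable.

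The paper avoids this bookkeeping by never writing the Lie derivative explicitly: it introduces the contraction $i_x(\eta)(x_1,\dots,x_{m-1})=\Phi(\eta)(\alpha(x),x_1,\dots,x_{m-1})$, records that $i_x$ is a twisted derivation of the wedge product, and defines the action as $\Theta(\xi,x)=-\,\delta\bigl(i_{\alpha^{-1}(x)}(\Phi^{-1}(\xi))\bigr)$, i.e.\ minus the coboundary of the plain contraction $\xi(x,-,\dots,-)$. With this definition the equality $\Theta(a\xi,x)=\Theta(\xi,a x)$ is automatic from $A$-multilinearity of $\xi$; the correction term in axiom (3) comes out as $-\,\delta(a)\wedge\Phi\bigl(\xi(x,-,\dots,-)\bigr)$, and the vanishing of the $(n+1)$-form $\delta(a)\wedge\Phi(\xi)$ (your observation that the complex dies above degree $n$, used at this precise point rather than in the representation axiom) converts it into exactly $-\rho(x)(a)\,\Phi(\xi)$; the representation axiom then follows from $\delta^2=0$ together with Cartan-type identities relating $i_x$ and $\delta$. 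If you wish to keep your explicit approach, you must first derive the correctly twisted Lie-derivative formula from $-\,\delta\circ i_{\alpha^{-1}(x)}\circ\Phi^{-1}$ (or reverse-engineer it from the axioms) before carrying out the verification.
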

\begin{proof}
For any $x\in L$, define the contraction $i_x:Alt_A(\mathcal{L},A)\rightarrow Alt_A(\mathcal{L},A)$ by
$$i_x(\eta)(x_1,\cdots,x_{m-1})=\Phi(\eta)(\alpha(x),x_1,\cdots,x_{m-1})$$
for $x_1,\cdots,x_{m-1}\in L$ and $\eta\in Alt_A^m(\mathcal{L},A)$. By a straightforward calculation, for any $\eta_1\in Alt_A^m(\mathcal{L},A)$, and $\eta_2\in Alt_A^k(\mathcal{L},A)$, we get the following equation:
\begin{equation}\label{intprop1}
i_x(\eta_1\wedge\eta_2)=i_x(\eta_1)\wedge \Phi(\eta_2)+(-1)^m \Phi(\eta_1)\wedge i_x(\eta_2)
\end{equation}

Define the right action $\Theta:\wedge^n_A L^*\times L\rightarrow \wedge^n_A L^*$ as follows:
\begin{equation}\label{rightaction}
\Theta(\xi,x)=-d(i_{\alpha^{-1}(x)}(\Phi^{-1}(\xi))).
\end{equation}
for all $X\in\wedge^n_A L$ and $\xi\in \wedge^n_A L^*$. Then it follows that the pair $(\wedge^n_A L^*,\Phi)$ is a right $(\mathcal{L},\alpha)$-module.
\end{proof}

\begin{Thm}\label{Corres2}
Let $(\mathcal{L},\alpha)$ be a regular hom-Lie-Rinehart algebra over $(A,\phi)$. If $L$ is a projective $A$-module of rank $n$, then there is a bijective correspondence between right $(\mathcal{L},\alpha)$-module structures on $(A,\phi)$ and left $(\mathcal{L},\alpha)$-module structures on $(\wedge^n_A L,\alpha)$.

\begin{proof}
First assume that $(A,\phi)$ is a right $(\mathcal{L},\alpha)$-module and the right action is given by $(a,x)\mapsto a.x$. Here, $\wedge^n_A L$ is projective $A$-module of rank $1$, so we have an isomorphism of $A$-modules $\Psi:\wedge^n_A L\rightarrow Hom_A(\wedge^n_A L^*,A)$ given by 
$$\Psi(X)(\xi)=\xi(X)$$
for $X\in \wedge^n_A L$ and $\xi\in  \wedge^n_A L^*$. For each $X\in \wedge^n_A L$, denote $\Psi(X)$ by $\Psi_X$ and define an invertible map $\gamma: Hom_A(\wedge^n_A L^*,A)\rightarrow Hom_A(\wedge^n_A L^*,A)$ as follows
$$\gamma(a.\Psi_X)=\phi(a).\Psi_{\alpha(X)}$$ 
for each $X\in \wedge^n_A L$.

Now, for any $x\in L$, $X\in \wedge_A^n L$, and $\xi\in \wedge_A^n L^*$, denote $\Phi^{-1}(\xi)$ by $\bar{\xi}$ and define a left action $\nabla:L\times Hom_A(\wedge^n_A L^*,A)\rightarrow Hom_A(\wedge^n_A L^*,A)$ as follows
$$\nabla(x,\Psi_X)(\xi)=\gamma(\Psi_X)(\Theta(\bar{\xi},x))-(\Psi_X(\bar\xi)).x$$
Here the map $\Theta:\wedge^n_A L^*\times L\rightarrow \wedge^n_A L^*$ is the right action of the $A$-module $L$ on $\wedge^n_A L^*$, as defined in Lemma \ref{lemhlr}. By direct calculation, the left action $\nabla$ makes the pair $(Hom_A(\wedge^n_A L^*,A),\gamma)$ a left $(\mathcal{L},\alpha)$-module. Subsequently, it gives a left $(\mathcal{L},\alpha)$-module structure on $(\wedge^n_A L,\alpha)$.

Conversely, let us consider $(\wedge^n_A L,\alpha)$ is a left $(\mathcal{L},\alpha)$-module, where the left action is given by $(x,X)\mapsto \nabla_x(X)$. Since $\wedge^n_A L$ is a projective $A$-module of rank $1$, we have an isomorphism $\theta:A\rightarrow Hom_A(\wedge^n_A L,\wedge^n_A L)$ which maps $a\mapsto \theta_a$, where $\theta_a(X)=\theta(a)(X)=a.X$ for $a\in A,~X\in\wedge^n_A L$. The isomorphism $\phi:A\rightarrow A$ induces an invertible map $\bar{\phi}:Hom_A(\wedge^n_A L,\wedge^n_A L)\rightarrow Hom_A(\wedge^n_A L,\wedge^n_A L)$, defined as: $\bar{\phi}(\theta_a)=\theta_{\phi(a)}.$

Define a right action $\mu:Hom_A(\wedge^n_A L,\wedge^n_A L)\times L\rightarrow Hom_A(\wedge^n_A L,\wedge^n_A L)$ by:
$$\mu(\theta_a,x)(X)=\bar{\phi}(\theta_a)([x,\alpha^{-1}(X)]_{\mathfrak{G}})-\nabla_x(\theta_a(\alpha^{-1}(X)))$$
here, $X\in \wedge^n_A L,~ x\in L,~a\in A$ and $[-,-]_{\mathfrak{G}}$ is the hom- Gerstenhaber bracket obtained from the hom-Lie bracket on $L$. A straightforward but long calculation shows that the pair $( Hom_A(\wedge^n_A L,\wedge^n_A L),\bar{\phi})$ is a right $(\mathcal{L},\alpha)$-module. Hence, there is a right  $(\mathcal{L},\alpha)$-module structure on $(A,\phi)$.
\end{proof}
  
\end{Thm}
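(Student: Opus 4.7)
The plan is to exploit the fact that when $L$ is a projective $A$-module of rank $n$, the line $\wedge^n_A L$ is a projective $A$-module of rank one, which forces canonical $A$-module isomorphisms $\Psi : \wedge^n_A L \to Hom_A(\wedge^n_A L^*, A)$ via evaluation and $\theta : A \to Hom_A(\wedge^n_A L, \wedge^n_A L)$ via scalar multiplication. Both of these carry the twists $\alpha$ and $\phi$ to induced invertible maps $\gamma$ and $\bar\phi$, and the strategy is to transport actions back and forth through these rank-one identifications, combined with the right action on $\wedge^n_A L^*$ furnished by Lemma \ref{lemhlr}.

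For the forward direction, suppose $(A,\phi)$ is a right $(\mathcal{L},\alpha)$-module. Lemma \ref{lemhlr} supplies the right action $\Theta$ of $L$ on $\wedge^n_A L^*$ built from the contraction $i_x$ and the de Rham-type differential $d$ of the cochain complex $(Alt_A(\mathcal{L},A),\delta)$. I would then take the prescription
\[
\nabla(x,\Psi_X)(\xi) \;=\; \gamma(\Psi_X)\bigl(\Theta(\bar\xi,x)\bigr) - \bigl(\Psi_X(\bar\xi)\bigr).x,
\]
with $\bar\xi = \Phi^{-1}(\xi)$, and verify the four left-module axioms of Definition \ref{hom-LR}: $A$-linearity in the first slot, the twisted Leibniz relation in the second slot, compatibility of $\gamma$ with the action of $A$, and the hom-Lie representation identity for the bracket. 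Each reduces, after pushing the isomorphism $\Psi$ through and using identity \eqref{intprop1}, to the corresponding right-module axiom for $(A,\phi)$; transporting $\nabla$ along $\Psi^{-1}$ then yields the desired left $(\mathcal{L},\alpha)$-module structure on $(\wedge^n_A L,\alpha)$.

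For the reverse direction, given a left action $\nabla$ of $(\mathcal{L},\alpha)$ on $(\wedge^n_A L,\alpha)$, I would use the isomorphism $\theta$ and its twist $\bar\phi$ to define the candidate right action on $A$ by
\[
\mu(\theta_a, x)(X) \;=\; \bar\phi(\theta_a)\bigl([x,\alpha^{-1}(X)]_{\mathfrak{G}}\bigr) - \nabla_x\bigl(\theta_a(\alpha^{-1}(X))\bigr),
\]
where $[-,-]_{\mathfrak{G}}$ is the hom-Gerstenhaber bracket of Example \ref{Hom-Ger2} extending the hom-Lie bracket on $L$. The first task is to confirm that $\mu(\theta_a, x)$ is actually an $A$-module endomorphism of the rank-one module $\wedge^n_A L$, hence of the form $\theta_{a'}$ for some $a'\in A$; this uses the hom-Leibniz rule of Definition \ref{HGR} together with clauses (3) and (4) of the left-module definition to cancel the non-$A$-linear contributions. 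Once $\mu$ is well defined, the right-module axioms for $(A,\phi)$ follow from the graded hom-Jacobi identity for $[-,-]_{\mathfrak{G}}$ and the representation property of $\nabla$.

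Finally, the two constructions are seen to be mutually inverse by tracing the definitions through $\Psi$ and $\theta$ and using the natural pairing between $\wedge^n_A L$ and $\wedge^n_A L^*$. The main obstacle is the bookkeeping in the reverse direction: verifying that $\mu(\theta_a, x)$ lies in the image of $\theta$ is not formal and requires the interplay between the hom-Leibniz rule for $[-,-]_{\mathfrak{G}}$ and the twists $\alpha$, $\phi$, $\bar\phi$, while the hom-Lie representation identity for $\mu$ demands a graded-Jacobi manipulation in which each term carries its own twist. The remaining checks are routine but notationally heavy, and the projectivity of $L$ is used critically to identify $End_A(\wedge^n_A L)$ with $A$.
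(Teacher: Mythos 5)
Your proposal is correct and follows essentially the same route as the paper: the same evaluation isomorphism $\Psi$ and scalar-multiplication isomorphism $\theta$ with their twists $\gamma$ and $\bar\phi$, the same formula for the left action via the right action $\Theta$ of Lemma \ref{lemhlr}, and the same formula for the right action via the hom-Gerstenhaber bracket. Your added remarks on where the verifications are nontrivial (well-definedness of $\mu(\theta_a,x)$ as an element of the image of $\theta$, and the mutual inverseness of the two constructions) only make explicit what the paper leaves as a ``straightforward but long calculation.''
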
  

Now, by Theorem \ref{Corres1} and Theorem \ref{Corres2}, we immediately get the following result:

\begin{Cor}\label{Res1}
Let $(\mathcal{L},\alpha)$ be a regular hom-Lie-Rinehart algebra over $(A,\phi)$.  If $L$ is a projective $A$-module of rank $n$, then there is a bijective correspondence between exact generators of the hom-Gerstenhaber algebra bracket on $\wedge^*_A L$ and left $(\mathcal{L},\alpha)$-module structures on $(\wedge^n_A L,\alpha)$.
\end{Cor}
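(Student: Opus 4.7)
The plan is to read the corollary off as the composition of the two bijections already established. Theorem \ref{Corres1} gives a bijection
\[
\{\text{exact generators of } [-,-] \text{ on } \wedge^*_A L\} \;\longleftrightarrow\; \{\text{right }(\mathcal{L},\alpha)\text{-module structures on }(A,\phi)\},
\]
and Theorem \ref{Corres2}, under the hypothesis that $L$ is projective of rank $n$, gives a bijection
\[
\{\text{right }(\mathcal{L},\alpha)\text{-module structures on }(A,\phi)\} \;\longleftrightarrow\; \{\text{left }(\mathcal{L},\alpha)\text{-module structures on }(\wedge^n_A L,\alpha)\}.
\]
Composing these two bijections produces the required correspondence, so the proof reduces to assembling the two maps in both directions and noting that each composition is the identity on its respective set of structures.

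Concretely, I would spell out the forward direction: given an exact generator $D$ of the hom-Gerstenhaber bracket on $\wedge^*_A L$, first produce the right action $\{a,x\} = \phi(a)D(x) - \rho(x)(a)$ of $L$ on $A$ from the proof of Theorem \ref{Corres1}, and then feed this right action into the construction of Theorem \ref{Corres2} to obtain the left action $\nabla_x$ on $\wedge^n_A L$ (through the isomorphism $\Psi: \wedge^n_A L \to \mathrm{Hom}_A(\wedge^n_A L^*,A)$ and the right action $\Theta$ on $\wedge^n_A L^*$ given by equation \eqref{rightaction}). For the reverse direction, given a left action on $(\wedge^n_A L,\alpha)$, one uses the rank-one identification $A \cong \mathrm{Hom}_A(\wedge^n_A L, \wedge^n_A L)$ to extract a right action on $(A,\phi)$ via the formula for $\mu$ in the proof of Theorem \ref{Corres2}, and then converts that right action into an exact generator by the formula in the proof of Theorem \ref{Corres1}.

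Since both individual correspondences are already proven to be bijections, there is essentially no obstacle beyond bookkeeping. The only point that deserves a brief mention in the write-up is that the intermediate object (a right $(\mathcal{L},\alpha)$-module structure on $(A,\phi)$) is genuinely the same in both theorems, so the two bijections are composable on the nose; no rescaling or reinterpretation is required. For that reason the corollary can be stated with a one-line proof, and the substantive content lies entirely in Theorems \ref{Corres1} and \ref{Corres2}.
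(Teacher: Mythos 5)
Your proposal matches the paper exactly: the corollary is obtained by composing the bijection of Theorem \ref{Corres1} (exact generators $\leftrightarrow$ right module structures on $(A,\phi)$) with that of Theorem \ref{Corres2} (right module structures on $(A,\phi)$ $\leftrightarrow$ left module structures on $(\wedge^n_A L,\alpha)$), and the paper likewise presents it as an immediate consequence. Your extra remarks about tracing the explicit formulas in each direction are correct but not needed beyond the one-line composition argument.
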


\section{Applications to hom-Lie algebroids}

 Here we consider hom-Lie algebroids as defined  by Laurent-Gengoux and Teles in \cite{hom-Lie}. There is also a modified version of hom-Lie algebroids presented by L. Cai, et al.  in \cite{hom-Lie1} and an equivalence is shown for the case of regular (or invertible) hom-Lie algebroids. First, we define Representations of a hom-Lie algebroid.
\subsection{\textbf{Representations of hom-Lie algebroids}} Let $\mathcal{A}:=(A,\phi, [-,-],\rho,\alpha)$ be a hom-Lie algebroid and $(E,\phi,\beta)$ be a hom-bundle over a smooth manifold $M$. A bilinear map $\nabla: \Gamma A\otimes \Gamma E\rightarrow \Gamma E$, denoted by $\nabla(x,s):=\nabla_x(s)$, is a representation of $\mathcal{A}$ on a hom-bundle $(E,\phi,\beta)$ if it satisfies the following properties:  
\begin{enumerate}
\item $\nabla_{f.x}(s)=\phi^*(f).\nabla_x(s)$;
\item $\nabla_x(f.s)=\phi^*(f).\nabla_x(s)+ \rho(x)[f].\beta(s)$;
\item $(\nabla,\beta)$ is a representation of the underlying hom-Lie algebra $(\Gamma A,[-,-],\alpha)$ on $\Gamma E$;
\end{enumerate}
for all $x\in \Gamma A,~s\in \Gamma E$ and $f\in C^{\infty}(M)$.

\begin{Exm}\label{Trivial Representation}
Let $\mathcal{A} =(A,\phi, [-,-],\rho,\alpha)$ be a hom-Lie algebroid over $M$. Suppose $\phi^*:C^{\infty}(M)\rightarrow C^{\infty}(M)$ denotes the algebra homomorphism induced by the smooth map $\phi:M\rightarrow M$. Define a map $$\nabla^{\phi^*}: \Gamma A\otimes C^{\infty}(M)\rightarrow C^{\infty}(M)$$
given by $\nabla^{\phi^*}(x,f)=\rho(x)[f]$ for $x\in \Gamma A$ and $f\in C^{\infty}(M)$. Then $\nabla^{\phi^*}$ is a canonical representation of $\mathcal{A}$ on the hom-bundle $(M\times \mathbb{R}, \phi,\phi^*)$.
\end{Exm}

\begin{Exm}\label{ExistenceR}
Let $\mathcal{A} =(A,\phi, [-,-],\rho,\alpha)$ be a hom-Lie algebroid over $M$ and $(E,\phi,\beta)$ be a hom-bundle over $M$, where $E$ is a trivial line bundle over $M$. Assume $s\in \Gamma E$ is a nowhere vanishing section of the trivial line bundle $E$ over $M$. Define a map $\nabla: \Gamma A\otimes \Gamma E \rightarrow \Gamma E$ by 
$$\nabla(x,f.s)=\rho(x)[f].\beta(s)$$
for all $x\in \Gamma A$ and $f\in C^{\infty}(M)$. Then the map $\nabla$ is a representation of $\mathcal{A}$ on $(E,\phi,\beta)$. 
\end{Exm}

Let $\mathcal{A}=(A,\phi, [-,-],\rho,\alpha)$ be a hom-Lie algebroid, where $A$ is a vector bundle of rank $n$ over $M$, then $\wedge^n A$ is a line bundle over $M$. Extend the map $\alpha: \Gamma A\rightarrow\Gamma A$ to a map $\tilde{\alpha}:\Gamma(\wedge^n A)\rightarrow (\Gamma\wedge^n A)$ defined by 
$$\tilde{\alpha}(x_1\wedge\cdots \wedge x_n)=\alpha(x_1)\wedge\cdots\wedge\alpha(x_n)$$
for any $x_1,x_2,\cdots,x_n\in \Gamma A$. 

\begin{Prop}\label{Corres3}
Let $\mathcal{A}=(A,\phi, [-,-],\rho,\alpha)$ be a regular hom-Lie algebroid. Then there is a one-one correspondence between representations of $\mathcal{A}$ on the hom-bundle $(\wedge^n A,\phi,\tilde{\alpha})$ and exact generators of the associated hom-Gerstenhaber algebra $\mathfrak{A}:=(\oplus_{k\geq 0}\Gamma\wedge^kA^*,\wedge,[-,-]_{\mathcal{A}},\tilde{\alpha})$ (here, $\tilde{\alpha}$ is  extension of the map $\alpha$ to higher degree elements).
\end{Prop}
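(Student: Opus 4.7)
The plan is to reduce the statement to Corollary \ref{Res1} by viewing the regular hom-Lie algebroid $\mathcal{A}$ as a regular hom-Lie-Rinehart algebra and invoking the Serre--Swan theorem to secure the projectivity hypothesis. Concretely, $\mathcal{A}=(A,\phi,[-,-],\rho,\alpha)$ gives rise to a regular hom-Lie-Rinehart algebra $(\mathcal{L},\alpha)$ over the pair $(C^{\infty}(M),\phi^{*})$, with $L=\Gamma A$ as the underlying $C^{\infty}(M)$-module, the anchor sending $x\in\Gamma A$ to the $\phi^{*}$-derivation $\rho(x)[-]$, and the bracket and hom-map inherited from $\mathcal{A}$. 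Regularity of $\mathcal{A}$ (that is, $\alpha$ invertible and $\phi$ a diffeomorphism) translates directly into regularity of $(\mathcal{L},\alpha)$ as defined in Definition \ref{hom-LR}.

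Next, since $A$ is a rank-$n$ vector bundle over a smooth manifold, Serre--Swan ensures that $L=\Gamma A$ is a finitely generated projective $C^{\infty}(M)$-module of rank $n$; moreover there is a canonical $C^{\infty}(M)$-module isomorphism $\wedge^{k}_{C^{\infty}(M)}\Gamma A\cong\Gamma(\wedge^{k}A)$, and under this identification the extension $\tilde{\alpha}$ of $\alpha$ to $\Gamma(\wedge^{k}A)$ coincides with the extension of $\alpha$ to the exterior powers in the hom-Lie-Rinehart setting. Hence the canonical hom-Gerstenhaber algebra structure on $\wedge^{*}_{C^{\infty}(M)}\Gamma A$ built from $(\mathcal{L},\alpha)$ agrees with the hom-Gerstenhaber algebra $\mathfrak{A}$ attached to $\mathcal{A}$ in \cite{hom-Lie}; this is essentially a restatement of the bijection between hom-Lie algebroid structures and hom-Gerstenhaber algebra structures recalled in the introduction.

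With these identifications in hand, I would unpack the two flavors of module data and observe they coincide. A left $(\mathcal{L},\alpha)$-module structure on $(\Gamma(\wedge^{n}A),\tilde{\alpha})$ is, by definition, a bilinear map $\nabla:\Gamma A\otimes\Gamma(\wedge^{n}A)\to\Gamma(\wedge^{n}A)$ together with the compatibility $\tilde{\alpha}(f\cdot s)=\phi^{*}(f)\cdot\tilde{\alpha}(s)$ and the axioms
\[
\nabla_{f\cdot x}(s)=\phi^{*}(f)\cdot\nabla_{x}(s),\qquad \nabla_{x}(f\cdot s)=\phi^{*}(f)\cdot\nabla_{x}(s)+\rho(x)[f]\cdot\tilde{\alpha}(s),
\]
plus the requirement that $(\nabla,\tilde{\alpha})$ represents the hom-Lie algebra $(\Gamma A,[-,-],\alpha)$ on $\Gamma(\wedge^{n}A)$. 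These conditions are literally the three axioms for a representation of $\mathcal{A}$ on the hom-bundle $(\wedge^{n}A,\phi,\tilde{\alpha})$ given at the opening of Section~4. Thus left $(\mathcal{L},\alpha)$-module structures on $(\Gamma(\wedge^{n}A),\tilde{\alpha})$ match, one to one, the representations of $\mathcal{A}$ on $(\wedge^{n}A,\phi,\tilde{\alpha})$.

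Applying Corollary \ref{Res1} to $(\mathcal{L},\alpha)$ then yields the desired bijection between exact generators of the hom-Gerstenhaber bracket on $\mathfrak{A}$ and representations of $\mathcal{A}$ on $(\wedge^{n}A,\phi,\tilde{\alpha})$. The main obstacle is not in any single deep step but rather in carefully verifying the compatibility of the two hom-Gerstenhaber algebra structures on $\oplus_{k}\Gamma(\wedge^{k}A)$---the one read off from the hom-Lie algebroid via the construction of \cite{hom-Lie} and the one coming from the induced hom-Lie-Rinehart algebra---and checking that the left-module axioms of Section~3 translate axiom for axiom into the representation axioms of Section~4; both verifications are essentially bookkeeping once the Serre--Swan identification is in place.
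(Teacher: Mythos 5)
Your proposal is correct and follows essentially the same route as the paper: the paper's own proof likewise begins ``This result follows from Corollary \ref{Res1}'' and merely supplements this with the explicit formulas $\nabla(a,X)=[a,X]_{\mathcal{A}}-(Da)\tilde{\alpha}(X)$ and the inverse reconstruction of $D$, whereas you instead supply the reduction bookkeeping (Serre--Swan projectivity of $\Gamma A$ and the axiom-by-axiom match between left $(\mathcal{L},\alpha)$-module structures and hom-Lie algebroid representations) that the paper leaves implicit. Both versions are valid; note only that the $\Gamma\wedge^k A^*$ in the statement is a typo for $\Gamma\wedge^k A$, which you have correctly interpreted.
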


\begin{proof}
This result follows from the Corollary \ref{Res1}. More precisely, given an exact generator $D$ of the associated hom-Gerstenhaber algebra $\mathfrak{A}$, define a map $\nabla:\Gamma A\otimes \Gamma(\wedge^n A)\rightarrow \Gamma(\wedge^n A)$ by
$$\nabla(a,X)=[a,X]_{\mathcal{A}}-(Da)\tilde{\alpha}(X),$$
where $a\in A$, and $X\in \Gamma(\wedge^n A)$. It is immediate to check that $\nabla$ is a representation of $\mathcal{A}$ on the hom-bundle $(\wedge^n A,\phi,\tilde{\alpha})$.

Conversely, let $\nabla$ is a representation of $\mathcal{A}$ on the hom-bundle $(\wedge^n A,\phi,\tilde{\alpha})$, then there exists a unique generator $D$ of the hom-Gerstenhaber bracket such that for any $X\in \Gamma(\wedge^n A)$ the following condition is satisfied:
\begin{equation}\label{eqcond}
  D(a)\tilde{\alpha}(X)=[a,X]_{\mathcal{A}}-\nabla(a,X).
\end{equation}
Define $D$ on higher degree elements by the following relation:
$$D(a\wedge b)=-[a,b]_{\mathcal{A}}+D(a)\tilde{\alpha}(b)-\tilde{\alpha}(a)\wedge D(b),$$
for $a\in A$, and $b\in \Gamma(\wedge^k A)$. By using this relation and the fact that $a\wedge X=0$ for any $X\in \Gamma(\wedge^n A)$, the following condition is equivalent to equation \eqref{eqcond}:
$$\nabla(a,X)=-\tilde{\alpha}(a)\wedge D(X).$$
 
\end{proof}

Let $\mathcal{A}=(A,\phi,[-,-],\rho,\alpha)$ be a hom-Lie algebroid and $(E,\phi,\beta)$ be a hom-bundle over a smooth manifold $M$, where $E$ is a line bundle. Then the following proposition extracts a representation of $\mathcal{A}$ on the hom-bundle $(E,\phi,\beta)$ from a given representation of $\mathcal{A}$ on the square hom-bundle given by the triplet $(E^2:= E \otimes E,\phi,\bar{\beta}:=\beta\otimes \beta)$ over $M$. 

\begin{Prop}\label{sqrt-ModSt}
Let $\mathcal{A}:=(A,  \phi,[-,-],\rho,\alpha)$ be a hom-Lie algebroid, and the triplet $(E,\phi,\beta)$ be a hom-bundle over a smooth manifold $M$, where $E$ is a line bundle. If the map $ \bar{\nabla}$ is a representation of $\mathcal{A}$ on the hom-bundle $(E^2,\phi,\bar{\beta})$. Then the map $\nabla:\Gamma A\otimes \Gamma E\rightarrow \Gamma E$ is a representation of $\mathcal{A}$ on the hom-bundle $(E,\phi,\beta)$, which is defined as follows: 

Let $s$ be a section of  $E$ and $U$ be an open subset of $M$, then $s=f.t$ for some $f\in C^{\infty}(U)$ and some section $t\in\Gamma M$, which vanishes nowhere over $U$. Then define 
$$\nabla(x,s)\big|_{\phi^{-1}(U)}=(\rho(x)(f).\beta(t))\big|_{\phi^{-1}(U)}+\frac{1}{2}(\bar{\nabla}(x,t^2)/\bar{\beta}(t^2)).\beta(s)\big|_{\phi^{-1}(U)}.$$

Furthermore, the map $\nabla:\Gamma A\otimes \Gamma E\rightarrow \Gamma E$ gives back the initial map $\bar{\nabla}:\Gamma A\otimes \Gamma (E\otimes E)\rightarrow \Gamma (E\otimes E)$ by the following equation:
$$\bar{\nabla}(x,s_1\otimes s_2)=\nabla(x,s_1)\otimes \beta(s_2)+\beta(s_1)\otimes \nabla(x,s_2)$$
for $x\in \Gamma A$, $s_1,s_2\in \Gamma E$.
\end{Prop}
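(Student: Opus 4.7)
The strategy is local. Fix an open set $U \subset M$ together with a nowhere vanishing section $t \in \Gamma(E|_U)$, so that every section over $U$ has the form $s = f \cdot t$ for some $f \in C^\infty(U)$. The proof breaks into three tasks: (i) verify that the defining formula is independent of the choice of trivializing section $t$; (ii) check the three axioms of a representation of $\mathcal{A}$ on the hom-bundle $(E,\phi,\beta)$; and (iii) derive the recovery formula for $\bar\nabla$. Throughout, I use that the hom-Leibniz axiom of $\mathcal{A}$ forces $\rho(x) : C^\infty(M) \to C^\infty(M)$ to be a $\phi^*$-derivation, i.e. $\rho(x)[fg] = \phi^*(f)\rho(x)[g] + \phi^*(g)\rho(x)[f]$.

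For (i), let $t' = g \cdot t$ be another nowhere vanishing section on $U$, so $s = f' \cdot t'$ with $f' = f/g$. From the representation property of $\bar\nabla$ applied to $(t')^2 = g^2 \cdot t^2$ and the derivation property of $\rho(x)$ applied to $g^2$, one computes
$$\bar\nabla(x,(t')^2)/\bar\beta((t')^2) \;=\; \bar\nabla(x,t^2)/\bar\beta(t^2) \;+\; 2\rho(x)(g)/\phi^*(g).$$
On the other hand, writing $f = (f/g) \cdot g$ and applying the derivation property of $\rho(x)$ gives $\rho(x)(f')\,\beta(t') = \rho(x)(f)\,\beta(t) - (\rho(x)(g)/\phi^*(g))\,\beta(s)$. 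Adding the two contributions, the correction terms cancel and the expression for $\nabla(x,s)$ agrees in both trivializations.

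For (ii), axiom (1) is immediate from $C^\infty(M)$-linearity of $\bar\nabla(\,\cdot\,, t^2)$ and of $\rho$. Axiom (2) is a short direct computation using the derivation property of $\rho(x)$. For axiom (3), the key observation is that on a nowhere vanishing section $s$, the defining formula can be rewritten as
$$\nabla(x, s) \;=\; \tfrac{1}{2}\bigl(\bar\nabla(x, s\otimes s)/\bar\beta(s\otimes s)\bigr) \cdot \beta(s),$$
a fact that one checks by direct substitution with $s = f \cdot t$. Since $E$ is a line bundle, $s_1 \otimes s_2 = s_2 \otimes s_1$, so once the recovery formula from (iii) is established it takes the symmetric form $\bar\nabla(x, s\otimes s) = 2\,\nabla(x, s) \otimes \beta(s)$. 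Combining this with the two representation identities satisfied by $(\bar\nabla, \bar\beta)$ — namely $\bar\nabla(\alpha(x))\circ\bar\beta = \bar\beta\circ\bar\nabla(x)$ and $\bar\nabla([x,y])\circ\bar\beta = \bar\nabla(\alpha(x))\circ\bar\nabla(y) - \bar\nabla(\alpha(y))\circ\bar\nabla(x)$ — and splitting the resulting identities in $\Gamma(E^2)$ along the factorization $\beta(s) \otimes (\,\cdot\,)$, one extracts the corresponding identities for $(\nabla, \beta)$. The Leibniz rule of axiom (2) then propagates these identities from nowhere vanishing sections to all of $\Gamma E$ by $R$-linearity.

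For (iii), by $R$-bilinearity in $(s_1, s_2)$ it suffices to verify the recovery formula when $s_1 = f_1 t$ and $s_2 = f_2 t$ for a common trivialization. Then $s_1 \otimes s_2 = f_1 f_2 \cdot t^2$, so
$$\bar\nabla(x, s_1\otimes s_2) = \phi^*(f_1)\phi^*(f_2)\,\bar\nabla(x, t^2) + \bigl(\phi^*(f_1)\rho(x)(f_2) + \phi^*(f_2)\rho(x)(f_1)\bigr)\bar\beta(t^2),$$
and the right-hand side $\nabla(x, s_1)\otimes \beta(s_2) + \beta(s_1) \otimes \nabla(x, s_2)$ expands via the definition of $\nabla$ to the same expression, using $\beta(s_i) = \phi^*(f_i)\beta(t)$ and $\bar\beta(t^2) = \beta(t)\otimes\beta(t)$.

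The main obstacle is axiom (3): translating the hom-Lie bracket compatibility of $\bar\nabla$ on the symmetric square $s \otimes s$ into that of $\nabla$ on $s$, while carefully tracking the factor $\tfrac{1}{2}$ and the twists $\beta, \bar\beta$, is the essential calculation. The alternative closed form $\nabla(x,s) = \tfrac{1}{2}\bigl(\bar\nabla(x,s\otimes s)/\bar\beta(s\otimes s)\bigr)\beta(s)$ for nowhere vanishing $s$ is what makes this step manageable.
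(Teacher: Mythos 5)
Your proposal is correct and follows essentially the same route as the paper: work in a local trivialization $s=f\cdot t$, encode $\bar\nabla(x,t^2)$ by the ratio function $\bar\nabla(x,t^2)/\bar\beta(t^2)$ (the paper's $f_x$), translate the representation identities for $\bar\nabla$ into relations among these functions, and verify the axioms and the recovery formula by direct computation. The only substantive difference is organizational --- you first establish the closed form $\nabla(x,s)=\tfrac{1}{2}\bigl(\bar\nabla(x,s\otimes s)/\bar\beta(s\otimes s)\bigr)\beta(s)$ on nowhere vanishing sections and propagate by the Leibniz rule, and you explicitly check independence of the choice of trivializing section $t$, a point the paper's proof passes over in silence.
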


\begin{proof}
First we prove that $(\nabla,\beta)$ is a representation of $(\Gamma A,[-,-],\alpha)$ on $\Gamma E$. We need to show the following:
\begin{itemize}
\item $\nabla([x,y],\beta(s))=\nabla(\alpha(x),\nabla(y,s))-\nabla(\alpha(y),\nabla(x,s)),$ and 
\item $\nabla(\alpha(x),\beta(s))=\beta(\nabla(x,s)).$
\end{itemize}

Let $s$ be a section of $E$ and $U\subset M$ be an open subset of $M$, then $s=f.t$ for some $f\in C^{\infty}(U)$ and some section $t\in\Gamma M$, which vanishes nowhere over $U$. Then
\begin{equation}\label{part3}
\nabla([x,y],\beta(s))\Big|_{\phi^{-2}(U)}=(\rho([x,y])(\phi(f)).\beta^2(t))\Big|_{\phi^{-2}(U)}+\frac{1}{2}(\bar{\nabla}([x,y],\bar{\beta}(t^2))/\bar{\beta}^2(t^2)).\beta^2(s)\Big|_{\phi^{-2}(U)}
\end{equation}
Here,
$$(\rho([x,y])(\phi(f)).\beta^2(t))\Big|_{\phi^{-2}(U)}=\Big(\rho(\alpha(x))\rho(y)(f)-\rho(\alpha(y))\rho(x)(f)\Big).\beta^2(t)\Big|_{\phi^{-2}(U)}.$$
Since $\beta$ is a bijective map, the section  $\bar{\beta}(t^2)$ vanishes nowhere over the open subset $\phi^{-1}(U)$ of $M$, so we can write
\begin{equation}\label{Relation}
{\nabla}(x,t^2)|_{\phi^{-1}(U)}=f_{x}.\bar{\beta}(t^2)|_{\phi^{-1}(U)}
\end{equation}
for some $f_{x}\in C^{\infty}(M)$, and $x\in\Gamma A$. The map $ \bar{\nabla}$ is a representation of $\mathcal{A}$ on the hom-bundle $(E^2,\phi,\bar{\beta})$, i.e.
$$\bar{\nabla}([x,y],\bar{\beta}(t^2))=\bar{\nabla}(\alpha(x),\bar{\nabla}(y,t^2))-\bar{\nabla}(\alpha(y),\bar{\nabla}(x,t^2))~~
\mbox{and}$$ 
$$\bar{\nabla}(\alpha(x),\bar{\beta}(t^2))=\bar{\beta}(\bar{\nabla}(x,t^2)).$$
Thus,
\begin{equation}\label{Relation1}
\phi^*(f_{\alpha^{-1}([x,y])})=\rho(\alpha(x))(f_y)-\rho(\alpha(y))(f_x).
\end{equation}
Also, by the definition of $\nabla$, we have the following:
\begin{align}\label{pt1}
\nonumber
&\nabla\big(\alpha(x),\nabla(y,s)\big)\big|_{\phi^{-2}(U)}\\\nonumber
=&~\nabla\Big(\alpha(x),\big(\rho(y)(f).\beta(t)+\frac{1}{2}(\bar{\nabla}(y,t^2)/\bar{\beta}(t^2)).\beta(s)\big)\Big)\Big|_{\phi^{-2}(U)}\\\nonumber
=&~\Big(\rho(\alpha(x))\rho(y)(f).\beta^2(t)+\frac{1}{2}\frac{\bar{\nabla}(\alpha(x),\bar{\beta}(t^2))}{\bar{\beta}^2(t^2)}.\beta\big(\rho(y)(f).\beta(t)\big)\\\nonumber
&+\frac{1}{2}\rho(\alpha(x))\Big(\frac{\bar{\nabla}(y,t^2)}{\bar{\beta}(t^2)}.\phi^*(f)\Big).\beta^2(t)+\frac{1}{4}\frac{\bar{\nabla}(\alpha(x),\bar{\beta}(t^2))}{\bar{\beta}^2(t^2)}.\beta\Big(\frac{\bar{\nabla}(y,t^2)}{\bar{\beta}(t^2)}.\beta(s)\Big)\Big)\Big|_{\phi^{-2}(U)}\\
=&~\Big(\rho(\alpha(x))\rho(y)(f).\beta^2(t)+\frac{1}{2}\frac{\bar{\nabla}(\alpha(x),\bar{\beta}(t^2))}{\bar{\beta}^2(t^2)}\rho(\alpha(y))(\phi^*(f)).\beta^2(t)\\\nonumber
&+\frac{1}{2}\rho(\alpha(x))\Big(\frac{\bar{\nabla}(y,t^2)}{\bar{\beta}(t^2)}.\phi^*(f)\Big).\beta^2(t)+\frac{1}{4}\frac{\bar{\nabla}(\alpha(x),\bar{\beta}(t^2))}{\bar{\beta}^2(t^2)}\phi^*\Big(\frac{\bar{\nabla}(y,t^2)}{\bar{\beta}(t^2)}\Big).\beta^2(s)\Big)\Big|_{\phi^{-2}(U)}.\\\nonumber
\end{align}
By using equation \eqref{Relation}, the equation \eqref{pt1} can also be expressed as:
\begin{align}\label{part1}
\nabla\big(\alpha(x),\nabla(y,s)\big)\Big|_{\phi^{-2}(U)}=&~\Big(\rho(\alpha(x))\rho(y)(f).\beta^2(t)+\frac{1}{2}\phi^*(f_{x}).\rho(\alpha(y))(\phi^*(f)).\beta^2(t)\\\nonumber
&+\frac{1}{2}\rho(\alpha(x))\big(f_{y}.\phi^*(f)\big).\beta^2(t)+\frac{1}{4}\phi^*(f_{x}.f_y).\beta^2(s)\Big)\Big|_{\phi^{-2}(U)}.\\\nonumber
\end{align}
Similarly,
\begin{align}\label{part2}
\nabla(\alpha(y),\nabla(x,s)\big)\Big|_{\phi^{-2}(U)}=&~\Big(\rho(\alpha(y))\rho(x)(f).\beta^2(t)+\frac{1}{2}\phi^*(f_{y}).\rho(\alpha(x))(\phi^*(f)).\beta^2(t)\\\nonumber
&+\frac{1}{2}\rho(\alpha(y))\big(f_{x}.\phi^*(f)\big).\beta^2(t)+\frac{1}{4}\phi^*(f_{x}.f_y).\beta^2(s)\Big)\Big|_{\phi^{-2}(U)}.\\\nonumber
\end{align}
By equations \eqref{part3}, \eqref{Relation1}, \eqref{part1}, and \eqref{part2}, we get the following equation:
\begin{equation}\label{item1}
\nabla\big(\alpha(x),\nabla(y,s)\big)-\nabla(\alpha(y),\nabla(x,s)\big)
=\nabla([x,y],\beta(s)).
\end{equation}
Moreover, 
\begin{align*}
&\nabla(\alpha(x),\beta(s))\big|_{\phi^{-2}(U)}\\
=&~\Big(\rho(\alpha(x))(\phi^*(f)).\beta^2(t) +\frac{1}{2} \frac{\bar{\nabla}(\alpha(x),\bar{\beta}(t^2))}{\bar{\beta}^2(t^2)}.\beta^2(s)\Big)\Big|_{\phi^{-2}(U)}\\
=&~\Big(\beta\big(\rho(x)(f).\beta(t)\big)+\frac{1}{2} \phi^*(f_{x}).\beta^2(s)\Big)\Big|_{\phi^{-2}(U)}\\
=&~\Big(\beta\big(\rho(x)(f).\beta(t)+\frac{1}{2} f_{x}.\beta(s)\big)\Big)\Big|_{\phi^{-2}(U)}\\
=&~\beta(\nabla(x,s))\big|_{\phi^{-2}(U)},
\end{align*}
i.e.,
\begin{equation}\label{item2}
\nabla(\alpha(x),\beta(s))=\beta(\nabla(x,s)).
\end{equation}
Thus by equations \eqref{item1}, and \eqref{item2}, the pair $(\nabla,\beta)$ is a representation of $(\Gamma A,[-,-],\alpha)$ on $\Gamma E$. It is immediate to observe that:
\begin{itemize}
\item $\nabla(f.x,s)=\phi^*(f).\nabla(x,s),$
\item $\nabla(x,f.s)=\phi^*(f).\nabla(x,s)+\rho(x)(f).\beta(s),$
\end{itemize}
for all $x\in \Gamma A$, $f\in C^{\infty}(M)$, and $s\in \Gamma E$. Therefore, $\nabla$ is a representation of $\mathcal{A}$ on the hom-bundle $(E,\phi,\beta)$. Let $U\subset M$ be an open subset of $M$, then 
$$\bar{\nabla}(x,s_1\otimes s_2)|_{\phi^{-1}(U)}=\nabla(x,s_1)\otimes \beta(s_2)|_{\phi^{-1}(U)}+\beta(s_1)\otimes \nabla(x,s_2)|_{\phi^{-1}(U)}$$
for $x\in \Gamma A$, $s_1,s_2\in \Gamma E$. Hence, squaring the map $\nabla:\Gamma A\otimes \Gamma E\rightarrow \Gamma E$ gives back the original map $\bar{\nabla}:\Gamma A\otimes \Gamma (E^2)\rightarrow \Gamma (E^2)$.
\end{proof}

The Proposition \ref{sqrt-ModSt} generalises Proposition $4.2$ of \cite{Trmsr}, which states: If a representation of a Lie algebroid on the square of a line bundle is given then there exists a representation of the Lie algebroid on the line bundle.

\begin{Rem}\label{Existence of Mod}

 Let $A$ be a real vector bundle of rank $n$ over a manifold $M$. If $\mathcal{A}:=(A,\phi, [-,-], \rho,\alpha)$ is a hom-Lie algebroid over $M$ then $\wedge^n A$ is a real line bundle over $M$. Note that $\wedge^n A\otimes \wedge^n A$ is a trivial line bundle over $M$. Now, define a map $\bar{\alpha}:\wedge^n A\otimes \wedge^nA\rightarrow \wedge^n A\otimes \wedge^n A$ as follows:
$$\tilde{\alpha}(X\otimes Y)=\tilde{\alpha}(X)\otimes \tilde{\alpha}(Y)$$
for $X,Y\in \wedge^n A$. By Example \ref{ExistenceR}, there exists a representation of $\mathcal{A}$ on the hom-bundle $(\wedge^n A\otimes \wedge^n A, \phi,\tilde{\alpha})$. Consequently, by the Proposition \ref{sqrt-ModSt} we get a representation of $\mathcal{A}$ on the hom-bundle $(\wedge^n A,\phi,\tilde{\alpha})$.
\end{Rem}
\subsection{\textbf{Cohomology of regular hom-Lie algebroids}} 

Let $\mathcal{A}:=(A,\phi, [-,-], \rho,\alpha)$ be a regular hom-Lie algebroid over $M$ and the map $\nabla$ be a representation of $\mathcal{A}$ on the hom-bundle $(E,\phi,\beta)$. We define a cochain complex $(C^*(\mathcal{A};E),d_{A,E})$
for $\mathcal{A}$ with coefficients in this representation as follows:

$C^*(\mathcal{A};E):=\oplus_{n\geq 0} \Gamma(Hom(\wedge^n A,E))$, and the map $d_{A,E}:\Gamma(Hom(\wedge^n A,E))\rightarrow \Gamma(Hom(\wedge^{n+1} A,E))$ is defined as
\begin{equation}\label{differential}
\begin{split}
(d_{A,E} \Xi)(x_1,\cdots,&x_{n+1})= \sum_{i=1}^{n+1}(-1)^{i+1}\nabla_{(\alpha^{-1}(x_i))}(\Xi(\alpha^{-1}(x_1),\cdots,\hat{x_i},\cdots,\alpha^{-1}(x_{n+1}))\\&+\sum_{1\leq i<j\leq n+1}(-1)^{i+j}\beta(\Xi(\alpha^{-2}([x_i,x_j]),\alpha^{-1}(x_1),\cdots,\hat{x_i},\cdots,\hat{x_j},\cdots,\alpha^{-1}(x_{n+1}))
\end{split}
\end{equation}
where $\Xi\in \Gamma(Hom(\wedge^n A,E)), x_i\in \Gamma A ~\mbox{and}~~ 1\leq i\leq n+1$. By using the definition of a representation, it follows that the map $d_{A,E}$ is a well-defined square zero operator. We denote the cohomology of the resulting cochain complex $(C^*(\mathcal{A};E),d_{A,E})$ by $H^*(\mathcal{A},E)$. In particular, if $\alpha=Id_A$ and $\phi=Id_M$, then $\mathcal{A}$ is a Lie algebroid and $H^*(\mathcal{A},E)$ is the usual de-Rham cohomology of the Lie algebroid $\mathcal{A}$ with coefficients in the representation on the vector bundle $E$.

We now consider the trivial representation of the regular hom-Lie algebroid $\mathcal{A}$ on the trivial hom-bundle $(M\times \mathbb{R},\phi,\phi^*)$ given in Example \ref{Trivial Representation}. Then the cochain complex $(C^*(\mathcal{A};M\times \mathbb{R}),d_{A,M\times \mathbb{R}})$ gives the map $d_{A,M\times \mathbb{R}}:\Gamma(\wedge^nA^*)\rightarrow \Gamma(\wedge^{n+1}A^*)$ defined by
\begin{equation}\label{NCBD}
\begin{split}
(d_{A,M\times \mathbb{R}} \xi)(x_1,\cdots,&x_{n+1})= \sum_{i=1}^{n+1}(-1)^{i+1}\rho{(\alpha^{-1}(x_i))}[\xi(\alpha^{-1}(x_1),\cdots,\hat{x_i},\cdots,\alpha^{-1}(x_{n+1})]\\&+\sum_{1\leq i<j\leq n+1}(-1)^{i+j}\phi^*(\xi(\alpha^{-2}([x_i,x_j]),\alpha^{-1}(x_1),\cdots,\hat{x_i},\cdots,\hat{x_j},\cdots,\alpha^{-1}(x_{n+1}))
\end{split}
\end{equation}
for $\xi\in \Gamma(\wedge^nA^*),~\mbox{and}~ x_i\in \Gamma A, ~\mbox{for}~ 1\leq i\leq n+1$. Let us denote the coboundary map $d_{A,M\times \mathbb{R}}$ simply by $d_A$. Define a map $\hat{\alpha}:\Gamma(\wedge^n A^*)\rightarrow \Gamma(\wedge^{n}A^*)$ by
$$\hat{\alpha}(\xi)(x_1,\cdots,x_n)=\phi^*(\xi(\alpha^{-1}(x_1),\cdots,\alpha^{-1}(x_n)))$$
for $\xi\in \Gamma(\wedge^nA^*),~\mbox{and}~ x_i\in \Gamma A, ~\mbox{for}~ 1\leq i\leq n$.
Then we have the following result:

\begin{Thm}\label{dgca1}
Let $(A,\phi,\alpha)$ be a regular hom-bundle over $M$, i.e. the map $\phi:M\rightarrow M$ is a diffeomorphism and $\alpha:\Gamma A\rightarrow \Gamma A$ is an invertible map. Then a hom-Lie algebroid structure $\mathcal{A}:=(A,\phi, [-,-], \rho,\alpha)$ on the hom-bundle $(A,\phi,\alpha)$ is equivalent to a $(\hat{\alpha},\hat{\alpha})$-differential graded commutative algebra
 on $\oplus_{n\geq 0}\Gamma(\wedge^nA^*)$.
 \end{Thm}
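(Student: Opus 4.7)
The plan is to establish the equivalence in two directions, using the formula (\ref{NCBD}) as the canonical dictionary between the two structures, and then noting uniqueness of the correspondence.

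\textbf{Hom-Lie algebroid $\Rightarrow$ DGCA.} Starting from $\mathcal{A}$, I would set $d := d_A$, the operator obtained from the trivial representation on $(M\times\mathbb{R},\phi,\phi^*)$ as in (\ref{NCBD}); this is available for any hom-Lie algebroid. The discussion preceding the theorem already gives that $d$ is a well-defined degree $1$ operator with $d^2=0$, so the remaining tasks are to check $d\circ\hat{\alpha}=\hat{\alpha}\circ d$ and the twisted Leibniz rule. The first follows by substituting into (\ref{NCBD}) and using the identities $\phi^*\circ\rho(x)=\rho(\alpha(x))\circ\phi^*$ and $\alpha\circ[-,-]=[\alpha(-),\alpha(-)]$. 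For the twisted Leibniz rule
$$d(\xi\wedge\eta)=d\xi\wedge\hat{\alpha}(\eta)+(-1)^{|\xi|}\hat{\alpha}(\xi)\wedge d\eta,$$
I would use that $\oplus_{n\ge 0}\Gamma(\wedge^n A^*)$ is generated over $C^{\infty}(M)$ by $\Gamma(A^*)$, reducing the verification by induction to the cases $\xi\in C^{\infty}(M)$ and $\xi\in\Gamma(A^*)$; each case unfolds by expanding (\ref{NCBD}) and using that $\rho(x)$ is a $\phi^*$-derivation.

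\textbf{DGCA $\Rightarrow$ Hom-Lie algebroid.} Conversely, given a $(\hat{\alpha},\hat{\alpha})$-differential $d$ on $\oplus_{n\ge 0}\Gamma(\wedge^n A^*)$, I would recover the anchor and bracket from the action of $d$ in low degrees. Define
$$\rho(x)(f):=(df)(\alpha(x)),\qquad x\in\Gamma A,\ f\in C^{\infty}(M).$$
Applying the Leibniz rule in degree $(0,0)$, where $\hat{\alpha}|_{C^{\infty}(M)}=\phi^*$, shows that $\rho(x)\in \mathrm{Der}_{\phi^*}(C^{\infty}(M))$ and that $\rho(fx)=\phi^*(f)\,\rho(x)$. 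Next, define the bracket implicitly by duality via
$$\phi^*\bigl(\xi(\alpha^{-1}[x,y])\bigr):=\rho(x)(\xi(y))-\rho(y)(\xi(x))-(d\xi)(\alpha(x),\alpha(y)),\qquad \xi\in\Gamma(A^*).$$
The Leibniz rule applied to $f\xi$ shows the right-hand side is $\phi^*$-semilinear in $\xi$, and then invertibility of $\phi^*$ together with the duality $\Gamma A\leftrightarrow \Gamma A^*$ produces a well-defined section $[x,y]\in\Gamma A$.

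\textbf{Verifying the hom-Lie algebroid axioms.} Skew-symmetry, $\alpha$-compatibility of the bracket, and the hom-Leibniz rule $[x,fy]=\phi^*(f)[x,y]+\rho(x)(f)\alpha(y)$ of Definition \ref{hom-Lie} all follow from the bracket-defining identity together with $d\circ\hat{\alpha}=\hat{\alpha}\circ d$. The hom-Jacobi identity and the representation property of $(\rho,\phi^*)$ on $C^{\infty}(M)$ come out of $d^2=0$: evaluating $d^2f=0$ on a triple of sections supplies the representation condition, and evaluating $d^2\xi=0$ on $\xi\in\Gamma(A^*)$ supplies, after substitution of the bracket-defining identity, exactly the cyclic hom-Jacobi sum. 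Finally, the $d$ reconstructed from the recovered hom-Lie algebroid agrees with the original $d$ on $C^{\infty}(M)$ and $\Gamma(A^*)$, hence on the entire algebra by the Leibniz rule, giving uniqueness of the dictionary.

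\textbf{Main obstacle.} The most delicate step is extracting the hom-Jacobi identity in the symmetric form $[\alpha(x),[y,z]]+\mathrm{cyc.}=0$ from $d^2\xi=0$ for $\xi\in\Gamma(A^*)$. The formula (\ref{NCBD}) produces nine contributions upon expanding $(d^2\xi)(\alpha^2(x),\alpha^2(y),\alpha^2(z))$, and combining them with the bracket-defining identity demands a careful accounting of the twists $\alpha^{\pm 1}$ and $\phi^*$; an analogous but milder bookkeeping underlies the verification of the twisted Leibniz rule in the forward direction.
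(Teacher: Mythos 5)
Your proposal follows essentially the same route as the paper's proof: in the forward direction you take $d:=d_A$ from equation \eqref{NCBD} and verify the twisted Leibniz rule by induction on degree, and in the converse direction you recover the anchor as $\rho(x)(f)=(df)(\alpha(x))$ and the bracket by the dual formula (yours is exactly the $n=1$ instance of \eqref{NCBD} rearranged, matching the paper's reconstruction), then deduce the hom-Lie algebroid axioms from $d^2=0$ and the Leibniz rule. The paper's own proof is in fact terser than your outline, so no discrepancy to report.
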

\begin{proof}
Let $\mathcal{A}:=(A,\phi, [-,-], \rho,\alpha)$ be a hom-Lie algebroid over $M$. Consider the coboundary operator $d_A$ defined by equation (\ref{NCBD}). Then 
we need to prove that 
$$d_A(\zeta\wedge \eta)=d_A(\zeta)\wedge \Psi(\eta)+(-1)^{|\zeta|}\Psi(\zeta)\wedge d_A(\eta)$$
for all $\zeta\in \Gamma(\wedge^p A^*)$, $\eta\in \Gamma(\wedge^q A^*)$, and $p,q\geq 0$. It simply follows by induction on the degree $p$. Then the tuple $(\oplus_{n\geq 0}\Gamma\wedge^nA^*,\hat{\alpha},d_A)$ is a $(\hat{\alpha},\hat{\alpha})$-differential graded commutative algebra.

Conversely, let $(\oplus_{n\geq 0}\Gamma\wedge^nA^*,\hat{\alpha},d)$ be a $(\hat{\alpha},\hat{\alpha})$-differential graded commutative algebra. We define the anchor map $\rho$ given by 
$\rho(x)[f]=<d_Af,\alpha(x)>,$
and the hom-Lie bracket $[-,-]$ is given by
\begin{equation*}
\begin{split}
<[x,y],\xi>=\rho(\alpha^2(x))<\tilde{\alpha}^{-1}(\xi),y>-\rho(\alpha^2(y))<\tilde{\alpha}^{-1}(\xi),x> -(d(\tilde{\alpha}^{-1}(\xi)))(\alpha(x),\alpha(y))
\end{split}
\end{equation*}
for all $x,y\in\Gamma A$, $f\in C^{\infty}(M)$ and $\xi\in\Gamma A^*$. Then it follows that $(A,\phi, [-,-], \rho,\alpha)$ is a hom-Lie algebroid.
\end{proof}
A version of the above theorem is proved in \cite{hom-Lie1}, by considering a modified definition of hom-Lie algebroid and the associated cochain complex. Let us recall the following definitions of interior multiplication and Lie derivative from \cite{hom-Lie1}:
\begin{itemize}
\item For any $X\in \Gamma(\wedge^k A)$, define the interior multiplication $i_X:\Gamma(\wedge^n A^*)\rightarrow \Gamma(\wedge^{n-k} A^*)$ by
$$(i_X\Xi)(x_1,x_2,\cdots,x_{n-k})=(\hat{\alpha}(\Xi))(\tilde{\alpha}(X),x_1,\cdots,x_{n-k})$$
for any $x_1,x_2,\cdots,x_{n-k}\in \Gamma A$.
\item Let $X\in \Gamma(\wedge^k A)$, then define the Lie derivative $L_X:\Gamma(\wedge^n A^*)\rightarrow \Gamma(\wedge^{n-k+1} A^*)$ by
$$L_X\circ\hat{\alpha}=i_X\circ d_A-(-1)^k d_A \circ i_{(\tilde{\alpha})^{-1}(X)}.$$

\end{itemize}
Now for all $X\in\Gamma(\wedge^k A)$, $Y\in\Gamma(\wedge^l A)$, $f\in C^{\infty}(M)$, and $\Xi\in \Gamma(\wedge^n A^*)$ the interior multiplication satisfies the following properties:
\begin{enumerate}
\item $i_{fX}\Xi=i_{X}(f.\Xi)=\phi^*(f).i_X,$
\item $\hat{\alpha}(i_X(\Xi))=i_{\tilde{\alpha}(X)}(\hat{\alpha}(\Xi)),$
\item $i_{\tilde{\alpha}(X\wedge Y)}\circ \hat{\alpha}=i_{\tilde{\alpha}(Y)}\circ i_X=(-1)^{kl}i_{\tilde{\alpha}(X)}\circ i_Y,$
\end{enumerate}
The Lie derivative $L_X:\Gamma(\wedge^n A^*)\rightarrow \Gamma(\wedge^{n-k+1} A^*)$, for any $X\in \Gamma(\wedge^k A)$ satisfies the following equation:
$$L_{f.X}\Xi=\phi^*(f).L_X\Xi-(-1)^k d_A f\wedge i_X(\Xi)$$ for all $f\in C^{\infty}(M)$, and $\Xi\in \Gamma(\wedge^n A^*)$.
If $k=1$, i.e. $X\in \Gamma A$, then for all $f\in C^{\infty}(M)$, and $\Xi\in \Gamma(\wedge^n A^*)$, we have
$$L_{X}(f.\Xi)=\phi^*(f).L_X\Xi+\rho(x)(f)\hat{\alpha}(\Xi)$$
Moreover, for $x\in \Gamma A$, the interior multiplication $i_x$ and Lie derivative $L_x$ satisfy the following identities:
 $$i_x(\Xi_1\wedge \Xi_2)=i_x\Xi_1\wedge \hat{\alpha}(\Xi_2)+(-1)^m\hat{\alpha}(\Xi_1)\wedge i_x\Xi_2,$$
 $$L_x(\Xi_1\wedge \Xi_2)=L_x\Xi_1\wedge \hat{\alpha}(\Xi_2)+\hat{\alpha}(\Xi_1)\wedge L_x\Xi_2,$$
for any $\Xi_1\in \Gamma(\wedge^m A^*)$, $\Xi_2\in \Gamma(\wedge^n A^*)$. The above properties and identities follows from \cite{hom-Lie1}, by replacing the differential $d:\Gamma(\wedge^k A^*)\rightarrow \Gamma(\wedge^{k+1} A^*)$, defined in Section $3$ of \cite{hom-Lie1}, by the differential $d_A$ given by equation \eqref{NCBD}.
 
\begin{Rem}\label{hom-Poisson}
For a manifold $M$ with a diffeomorphism $\psi:M\rightarrow M$, the tangent hom-Lie algebroid $\mathcal{T}$ is given by the tuple $(\psi^!TM,\psi,[-,-]_{\psi^*}, Ad_{\psi^*},Ad_{\psi^*})$, where the bracket $[-,-]_{\psi^*}$ and anchor map $Ad_{\psi^*}$ are defined as follows:
\begin{itemize}
\item $[X,Y]_{\psi^*}=\psi^*\circ X\circ (\psi^*)^{-1}\circ Y\circ (\psi^*)^{-1}-\psi^*\circ Y\circ (\psi^*)^{-1}\circ X\circ (\psi^*)^{-1}$ for $X,Y\in\psi^!TM $;
\item $Ad_{\psi^*}(X)=\psi^*\circ X\circ (\psi^*)^{-1}$ for $X\in\psi^!TM $.
\end{itemize}

If $(M,\psi,\pi)$ is a hom-Poisson manifold, where $\psi:M\rightarrow M$ is a diffeomorphism and $\pi$ is a hom-Poisson bivector, then $\mathcal{T^*}:=(\psi^{!}T^{*} M,\psi,[-,-]_{\pi^{\#}},{Ad_{\psi^*}}^{\dagger},\pi^{\#}\circ Ad_{\psi^*}^{\dagger})$ is the cotangent hom-Lie algebroid as defined in \cite{hom-Lie1}, where 

\begin{itemize}
\item $[\xi,\eta]_{\pi^{\#}}=L_{\pi^{\#}(\xi)}(\eta)-L_{\pi^{\#}(\eta)}(\xi)-d\pi(\xi,\eta)$ for $\xi,\eta\in \psi^{!}T^{*} M$,
\item ${Ad_{\psi^*}}^{\dagger}(\xi)(X)=\psi^*(\xi (Ad_{\psi^*}^{-1}(X)))$ for $\xi\in \psi^{!}T^{*} M,~X\in\psi^!TM $, and
\item the anchor map is $\pi^{\#}\circ Ad_{\psi^*}^{\dagger}$  instead of $\pi^{\#}$ (Note that $\pi^{\#}$ is the anchor map for the cotangent hom-Lie algebroid in \cite{hom-Lie1}), since we are using the Definition \ref{hom-Lie} of hom-Lie algebroid.
\end{itemize}

Let us consider $(\Gamma(\wedge^{top} \psi^{!}T^{*} M))^2:=\Gamma(\wedge^{top} \psi^{!}T^{*} M)\otimes \Gamma(\wedge^{top} \psi^{!}T^{*} M)$, where $\wedge^{top}$ denotes the highest exterior power. Define a map $D: \Gamma(\psi^{!}T^{*} M)\otimes (\Gamma(\wedge^{top} \psi^{!}T^{*} M))^2\rightarrow (\Gamma(\wedge^{top} \psi^{!}T^{*} M))^2$ as follows: 
$$D_{\xi}(\eta_1\otimes \eta_2):= D(\xi,(\eta_1\otimes \eta_2))=[[\xi,\eta_1]]_{\pi^{\#}} \otimes {Ad_{\psi^*}}^{\dagger}(\eta_2)+{Ad_{\psi^*}}^{\dagger}(\eta_1)\otimes L_{\pi^{\#}(\xi)}(\eta_2)  $$
for $\xi\in \Gamma(\psi^{!}T^{*} M)$ and $\eta_1,\eta_2\in \Gamma(\wedge^{top}\psi^{!}T^{*} M)$. Then it follows by a long but straightforward calculation that the map $D$ is a representation of $\mathcal{T^*}$ on the hom-bundle $((\Gamma(\wedge^{top} \psi^{!}T^{*} M))^2,\psi,\bar{Ad_{\psi^*}^{\dagger}})$, where the map $\bar{Ad_{\psi^*}^{\dagger}}$ is the extension of the map $Ad_{\psi^*}^{\dagger}$ to $(\Gamma(\wedge^{top} \psi^{!}T^{*} M))^2$. 
\end{Rem}

\subsection{\textbf{Homology of regular hom-Lie algebroids}}
Let $\mathcal{A}:=(A,\phi, [-,-], \rho,\alpha)$ be a regular hom-Lie algebroid over a manifold $M$, where $A$ is a real vector bundle of rank $n$ over $M$, then by Remark \ref{Existence of Mod}, we get a representation of $\mathcal{A}$ on the hom-bundle $(\wedge^n A,\phi,\tilde{\alpha})$, given by the map  $\nabla:\Gamma A\otimes \Gamma \wedge^n A\rightarrow \Gamma \wedge^n A$. Next, by Proposition \ref{Corres3}, we get a chain complex $(\oplus_{k\geq 0}\Gamma\wedge^kA,D_{\nabla})$. Thus we get a homology of regular hom-Lie algebroid $\mathcal{A}$ given by the homology of the chain complex $(\oplus_{k\geq 0}\Gamma\wedge^kA,D_{\nabla})$. Let us denote this homology by $H_{*}^{\nabla}(\mathcal{A})$.

If $\nabla^1$ and $\nabla^2$ are two representations of $\mathcal{A}$ on the hom-bundle $(\wedge^n A,\phi,\tilde{\alpha})$, then it is natural to ask about the relation between the induced homologies.

Firstly, let $D_{\nabla^1}$ and $D_{\nabla^2}$ be exact generators of the associated hom-Gerstenhaber algebra $\mathfrak{A}$ to the hom-Lie algebroid $\mathcal{A}$, obtained by Proposition \ref{Corres3}, respectively. Then first observe that 
$$D_{\nabla^1}-D_{\nabla^2}(f.x)=\phi(f).(D_{\nabla^1}-D_{\nabla^2})(x)$$
for $f\in C^{\infty}(M)$ and $x\in \Gamma A$. Therefore, there exists $\xi\in \Gamma A^*$ such that 
\begin{equation}\label{gen-diff1}
D_{\nabla^1}-D_{\nabla^2}(x)=\phi(\xi(x))=i_{\xi}(x),
\end{equation}
for $x\in \Gamma A$. Since $D_{\nabla^1}$ and $D_{\nabla^2}$ commute with the map $\alpha$, we have $\phi^*(i_{\xi}(x))=i_{\xi}(\alpha(x))$. By equation \eqref{generator}, $D_{\nabla^1}-D_{\nabla^2}$ satisfies the following condition:
$$(D_{\nabla^1}-D_{\nabla^2})(X\wedge Y)=(D_{\nabla^1}-D_{\nabla^2})(X)\wedge\tilde{\alpha}(Y)+(-1)^{|X|}\tilde{\alpha}(Y)\wedge (D_{\nabla^1}-D_{\nabla^2})(Y)$$
for $X,Y\in \mathfrak{A}$, $|X|$ denotes degree of $X$. Then for any $X\in\mathfrak{A}$, it follows that
$D_{\nabla^1}-D_{\nabla^2}(X)=i_{\xi}(X)$
 and $\tilde{\alpha}\circ i_{\xi}=i_{\xi}\circ \tilde{\alpha}$. The equation \eqref{gen-diff1} is equivalent to the following equation: 
\begin{equation}
(\nabla^1_x-\nabla^2_x)(X)=i_{\xi}(x).\tilde{\alpha}(X)
\end{equation}
for $x\in \Gamma A$ and $X\in \Gamma (\wedge^n A)$. Now, let us make the following observations:
\begin{itemize}
\item $(D_{\nabla^2}\circ i_{\xi}+i_{\xi}\circ D_{\nabla^2})(x,y)=-i_{d\xi}(\alpha(x),\alpha(y))$ for any $x,y\in \Gamma A$, and
\item by using the fact that $D_{\nabla^1}\circ D_{\nabla^1}=0$, and $D_{\nabla^2}\circ D_{\nabla^2}=0$, we get $d\xi=0$, i.e. $\xi$ is a $1$-cocycle.
\end{itemize}

We say that the maps $\nabla^1$ and $\nabla^2$ are homotopic if $\xi$ is a $1$-coboundary, i.e. $\xi=df$ for some $f\in C^{\infty}(M)$. Similarly, in this case the corresponding generators $D_{\nabla^1}$ and $D_{\nabla^2}$ are said to be homotopic. 

\begin{Thm}
If representations $\nabla^1$ and $\nabla^2$ of $\mathcal{A}$ on the hom-bundle $(\wedge^n A,\phi,\tilde{\alpha})$ are homotopic, then $H_{*}^{\nabla^1}(\mathcal{A})\cong H_{*}^{\nabla^2}(\mathcal{A})$.  
\end{Thm}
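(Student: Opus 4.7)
The plan is to construct an explicit invertible chain map
$\Phi \colon (\mathfrak{A}, D_{\nabla^{2}}) \to (\mathfrak{A}, D_{\nabla^{1}})$
implementing the cohomological triviality of $\xi = df$; the isomorphism of homologies then follows immediately. Since $e^{f} \in C^{\infty}(M)$ is nowhere vanishing, multiplication by a suitable ($\phi^{\ast}$-twisted) function built from $e^{f}$ is a grade-preserving $R$-linear bijection of $\mathfrak{A}$, and the content of the argument is to arrange this twist so that it intertwines the two exact generators.

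Concretely, I would take $\Phi(X) = g \cdot X$ for $X \in \Gamma\wedge^{k}A$, with $g \in C^{\infty}(M)$ built from $f$ (the first guess being $g = e^{f}$, and more likely $g = (\phi^{\ast})^{-1}(e^{f})$ once the $\phi^{\ast}$-twists are tracked). To verify $D_{\nabla^{1}}\circ\Phi = \Phi \circ D_{\nabla^{2}}$, I would apply the generator identity \eqref{generator} with first slot equal to $g$: since $D_{\nabla^{1}}(g) = 0$, one obtains
\[
D_{\nabla^{1}}(g \cdot Y) \;=\; [g, Y] \;+\; \phi^{\ast}(g)\, D_{\nabla^{1}}(Y)
\]
for any homogeneous $Y \in \mathfrak{A}$. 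The iterated hom-Leibniz rule together with the anchor reduces $[g,Y]$ to a contraction of the form $-\phi^{\ast}(g) \cdot i_{df}(Y)$, and substituting $D_{\nabla^{1}} - D_{\nabla^{2}} = i_{df}$ then collapses the right-hand side to $\phi^{\ast}(g)\cdot D_{\nabla^{2}}(Y)$. Choosing $g$ with the correct $\phi^{\ast}$-twist converts $\phi^{\ast}(g)$ back to $g$ and produces exactly $\Phi(D_{\nabla^{2}}(Y))$. The derivation properties of both $D$ and $i_{\xi}$, together with the commutativity $\tilde{\alpha}\circ i_{\xi} = i_{\xi}\circ\tilde{\alpha}$ already noted, propagate the identity from the generating set $C^{\infty}(M) \cup \Gamma A$ to all of $\mathfrak{A}$.

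The main obstacle will be this bookkeeping of the $\phi^{\ast}$ (and $\tilde{\alpha}$) twists. In the classical Lie algebroid case ($\phi = \mathrm{id}$, $\alpha = \mathrm{id}$), the plain map $X \mapsto e^{f}\cdot X$ already intertwines the two generators, and the statement specializes to the familiar Brylinski-type result for Poisson or Lie algebroid homology. In the hom-setting, the additional occurrences of $\phi^{\ast}$ -- both in the generator identity and in the defining relation $i_{\xi}(x) = \phi^{\ast}(\xi(x))$ -- force one either to conjugate by the appropriate power of $\phi^{\ast}$ or to choose $g$ with a compensating twist. Once this normalization is fixed, the verification is a direct hom-analogue of the classical calculation, and the resulting $\Phi$ yields the required isomorphism $H_{*}^{\nabla^{1}}(\mathcal{A}) \cong H_{*}^{\nabla^{2}}(\mathcal{A})$.
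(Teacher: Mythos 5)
Your proposal is correct and follows essentially the same route as the paper: the paper likewise defines $F(\lambda)=e^{f}\lambda$ and uses the generator identity \eqref{generator} together with $[e^{f},\lambda]_{\mathcal{A}}=e^{\phi^*(f)}[f,\lambda]_{\mathcal{A}}$ to obtain the intertwining relation $D_{\nabla^1}(e^{f}\lambda)=e^{\phi^*(f)}D_{\nabla^2}(\lambda)$. The only (cosmetic) difference is that the paper does not normalize the multiplier to get a strict chain map as you suggest, but instead checks directly that this $\phi^*$-twisted relation sends $\ker D_{\nabla^2}$ into $\ker D_{\nabla^1}$ and $\operatorname{im} D_{\nabla^2}$ into $\operatorname{im} D_{\nabla^1}$ (via $e^{f}D_{\nabla^2}(R)=D_{\nabla^1}(e^{{\phi^*}^{-1}(f)}R)$), which yields the isomorphism on homology just as well.
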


\begin{proof}
It is given that maps $\nabla^1$ and $\nabla^2$ are homotopic, i.e. for $X\in\Gamma(\wedge^{top}A)$ and $x\in \Gamma A$ we have the following equation  
$$(\nabla^1_x-\nabla^2_x)(X)=i_{df}(x).\tilde{\alpha}(X),$$
or equivalently 
$$D_{\nabla^1}-D_{\nabla^2}=i_{df}$$
for some $f\in C^{\infty}(M)$ such that $\tilde{\alpha}\circ i_{df}=i_{df}\circ\tilde{\alpha}$. Now, Let us define a map $F:\oplus_{k\geq 0}\Gamma\wedge^kA\rightarrow\oplus_{k\geq 0}\Gamma\wedge^kA$ by
$$F(\lambda)=e^f\lambda$$
for $\lambda\in \Gamma\wedge^kA,~k\geq 0$. By using the fact that $[e^f,\lambda]_{\mathcal{A}}=-\rho(\lambda)(e^f)=e^{\phi^*(f)}[f,\lambda]_{\mathcal{A}}$, and equation \eqref{generator} for generators $D_{\nabla_1}$ and $D_{\nabla_2}$, we get the following relation:
\begin{equation}\label{rel-gen}
D_{\nabla_1}(e^f.\lambda)=e^{\phi^*(f)}.D_{\nabla_2}(\lambda)
\end{equation}
for any $\lambda\in\Gamma\wedge^kA,~k\geq 0 $. Now, it is clear that $F(Ker(D_{\nabla^2}))\subset Ker(D_{\nabla^1})$ and thus $F$ induces a map $\tilde{F}:Ker(D_{\nabla^2})\rightarrow Ker(D_{\nabla^1})$. 
Let $\lambda=D_{\nabla^2}(R)$ for $\lambda\in\Gamma(\wedge^kA),~k>0$ and for some $R\in \Gamma(\wedge^{k-1}A),~k>0$. Then 
$$F(\lambda)=e^f.\lambda=e^f.D_{\nabla^2}(R)=D_{\nabla^1}(e^{{\phi^*}^{-1}(f)}.R),$$
Hence $\tilde{F}$ induces an isomorphism $\mathcal{F}:H_{*}^{\nabla^2}(\mathcal{A})\rightarrow H_{*}^{\nabla^1}(\mathcal{A})$. 
\end{proof}

Let $(M,\psi,\pi)$ be a hom-Poisson manifold and recall the associated cotangent hom-Lie algebroid from remark \ref{hom-Poisson}, which is given by $\mathcal{T^*}:=(\psi^{!}T^{*} M,\psi,[-,-]_{\pi^{\#}},{Ad_{\psi^*}}^{\dagger},\pi^{\#}\circ Ad_{\psi^*}^{\dagger})$. Also recall that the map $D$ is a representation of $\mathcal{T^*}$ on the hom-bundle $((\wedge^{top} \psi^{!}T^{*} M)^2, \psi, ({Ad_{\psi^*}}^{\dagger})^2)$. Then by Proposition \ref{sqrt-ModSt}, we have a representation of $\mathcal{T^*}$ on the hom-bundle $(\wedge^{top} \psi^{!}T^{*} M,\psi,{Ad_{\psi^*}}^{\dagger})$. In particular, we have the following result:

\begin{Prop}\label{hom-Poisson R1}
Let $(M,\psi,\pi)$ be a hom-Poisson manifold. Define a map $$\bar{D}:\Gamma(\psi^{!}T^{*} M)\otimes \Gamma(\wedge^{top} \psi^{!}T^{*} M)\rightarrow \Gamma(\wedge^{top} \psi^{!}T^{*} M)$$ as follows:
\begin{equation}\label{repontop}
\bar{D}(\xi,\mu)=[\xi,\mu]_{\pi^{\#}}-\pi(d\xi).{Ad_{\psi^*}}^{\dagger}(\mu)
\end{equation}
for any $\xi\in \Gamma(\psi^{!}T^{*} M) $, $\mu\in \Gamma(\wedge^{top} \psi^{!}T^{*} M)$. Then the map $\bar{D}$ is a representation of $\mathcal{T^*}$ on the hom-bundle $(\wedge^{top} \psi^{!}T^{*} M,\psi,{Ad_{\psi^*}}^{\dagger})$.
\end{Prop}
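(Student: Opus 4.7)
The plan is to derive Proposition \ref{hom-Poisson R1} as the explicit square-root representation produced by applying Proposition \ref{sqrt-ModSt} to the representation $D$ of $\mathcal{T}^{*}$ on the square hom-bundle $((\Gamma(\wedge^{top}\psi^{!}T^{*}M))^{2},\psi,\overline{Ad_{\psi^{*}}^{\dagger}})$ constructed in Remark \ref{hom-Poisson}. Since $E := \wedge^{top}\psi^{!}T^{*}M$ is a line bundle, any section can locally be written $\mu = f\cdot\nu$ for a nowhere-vanishing trivialisation $\nu$ on some open $U\subset M$, and the formula of Proposition \ref{sqrt-ModSt} specialises to
$$\bar{D}(\xi,\mu)\big|_{\psi^{-1}(U)} = (\pi^{\#}\circ Ad_{\psi^{*}}^{\dagger})(\xi)(f)\cdot Ad_{\psi^{*}}^{\dagger}(\nu) + \tfrac{1}{2}\left(\frac{D(\xi,\nu\otimes\nu)}{\overline{Ad_{\psi^{*}}^{\dagger}}(\nu\otimes\nu)}\right)\cdot Ad_{\psi^{*}}^{\dagger}(\mu).$$
Because $E$ is a line bundle, both tensor factors appearing in $D(\xi,\nu\otimes\nu)$ (as defined in Remark \ref{hom-Poisson}) are scalar multiples of $Ad_{\psi^{*}}^{\dagger}(\nu)$, so the above ratio is a well-defined smooth function and the right-hand side yields a genuine section of $E$ over $\psi^{-1}(U)$.

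Next, I would match this local formula against the candidate expression \eqref{repontop}. By the symmetrisation relation $\overline{\nabla}(x,s_{1}\otimes s_{2}) = \nabla(x,s_{1})\otimes\beta(s_{2}) + \beta(s_{1})\otimes\nabla(x,s_{2})$ stated at the end of Proposition \ref{sqrt-ModSt}, it suffices to verify, for any nowhere-vanishing $\nu$,
$$[\xi,\nu]_{\pi^{\#}}\otimes Ad_{\psi^{*}}^{\dagger}(\nu) + Ad_{\psi^{*}}^{\dagger}(\nu)\otimes L_{\pi^{\#}(\xi)}(\nu) = \bar{D}(\xi,\nu)\otimes Ad_{\psi^{*}}^{\dagger}(\nu) + Ad_{\psi^{*}}^{\dagger}(\nu)\otimes\bar{D}(\xi,\nu),$$
with $\bar{D}$ given by \eqref{repontop}. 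Cancelling the term carrying $[\xi,\nu]_{\pi^{\#}}$ in the first slot and using that $E$ has rank $1$ further reduces this to the single scalar identity
$$[\xi,\nu]_{\pi^{\#}} - L_{\pi^{\#}(\xi)}(\nu) = 2\pi(d\xi)\cdot Ad_{\psi^{*}}^{\dagger}(\nu)$$
inside $\Gamma(\wedge^{top}\psi^{!}T^{*}M)$.

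To establish this hom-Koszul identity I would start from the defining formula $[\xi,\eta]_{\pi^{\#}} = L_{\pi^{\#}(\xi)}\eta - L_{\pi^{\#}(\eta)}\xi - d\pi(\xi,\eta)$ for $1$-forms $\xi,\eta$, extend it to a $1$-form paired with a top-form through the hom-Leibniz rule of the hom-Gerstenhaber bracket (Definition \ref{HGR}), and then apply the Lie-derivative and interior-product identities collected at the end of Section~4.2 (in particular $L_{x}(f\cdot\Xi) = \psi^{*}(f)L_{x}\Xi + \rho(x)(f)\hat{\alpha}(\Xi)$ and the Cartan formula $L_{X}\circ\hat{\alpha} = i_{X}\circ d_{A} - (-1)^{k}d_{A}\circ i_{\tilde{\alpha}^{-1}(X)}$) to collapse the left-hand side into the predicted multiple of $\pi(d\xi)\cdot Ad_{\psi^{*}}^{\dagger}(\nu)$.

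The main obstacle is precisely this last identity: it is the hom-theoretic counterpart of the classical Koszul formula that relates the cotangent-algebroid bracket on forms to a Lie derivative plus a correction by $d\xi$ contracted against $\pi$, and pushing it through the hom setting requires careful bookkeeping of the $\psi^{*}$-twistings attached to every bracket, Lie derivative, and interior product. Once that identity is in place, $C^{\infty}(M)$-linearity of $\bar{D}$ in $\xi$ and the hom-Leibniz rule in $\mu$ follow directly from the corresponding properties of $[-,-]_{\pi^{\#}}$ and of the scalar $\pi(d\xi)$, while the remaining representation axioms for $\bar{D}$ transfer automatically from $D$ through Proposition \ref{sqrt-ModSt}.
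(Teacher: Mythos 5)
Your route is genuinely different from the paper's. The paper proves the proposition head-on: it takes \eqref{repontop} as the definition of $\bar{D}$ and verifies the representation axioms directly, computing $\bar{D}(\tilde{\xi},\bar{D}(\eta,\mu))-\bar{D}(\tilde{\eta},\bar{D}(\xi,\mu))$ and matching it against $\bar{D}([\xi,\eta]_{\pi^{\#}},\tilde{\mu})$; the computational heart is the two-variable identity $\pi(d[[\xi,\eta]]_{\pi^{\#}})=\pi^{\#}(\tilde{\tilde{\xi}})(\pi(d\eta))-\pi^{\#}(\tilde{\tilde{\eta}})(\pi(d\xi))$ (a cocycle condition for $\xi\mapsto\pi(d\xi)$), extracted from the Lie-derivative identities. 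You instead invoke the square-root representation supplied by Proposition \ref{sqrt-ModSt} applied to the representation $D$ of Remark \ref{hom-Poisson}, so that all representation axioms transfer for free, and you reduce the entire proposition to the single claim that \eqref{repontop} squares back to $D$, i.e.\ to the hom-Koszul identity $[\xi,\nu]_{\pi^{\#}}-L_{\pi^{\#}(\xi)}(\nu)=2\pi(d\xi)\,Ad_{\psi^{*}}^{\dagger}(\nu)$ on a nowhere-vanishing local section $\nu$. The reduction itself is sound: uniqueness of the square root on a line bundle follows, as you indicate, from rank one together with the Leibniz rule in the second argument, and the target identity is the correct one --- it is exactly equivalent to the paper's own remark that $\bar{D}(\xi,\mu)=L_{\pi^{\#}(\xi)}(\mu)+\pi(d\xi).\tilde{\mu}$. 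What your approach buys is a cleaner logical structure and a one-variable verification in place of a two-variable bracket computation; what it costs is full reliance on Remark \ref{hom-Poisson}, whose ``long but straightforward calculation'' the paper also only asserts, so the total computational burden is redistributed rather than removed.

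The one substantive shortfall is that you do not actually prove the hom-Koszul identity: you correctly identify it as the load-bearing step and sketch the right strategy (extend the degree-one Koszul formula through the hom-Leibniz rule of the hom-Gerstenhaber bracket and collapse via the Cartan formula), but the bookkeeping of the $\psi^{*}$- and $Ad_{\psi^{*}}^{\dagger}$-twists is precisely where the work lies. As written, the proposal is a correct and genuinely alternative reduction, not yet a complete proof.
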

\begin{proof}
Let us denote $\tilde{\gamma}:={Ad_{\psi^*}}^{\dagger}(\gamma)$, and $\bar{\gamma}:=({Ad_{\psi^*}}^{\dagger})^{-1}(\gamma)$ for $\gamma\in \Gamma(\wedge^k \psi^{!}T^{*} M)$. Then 
$$\bar{D}([\xi,\eta]_{\pi^{\#}},\tilde{\mu})=[[\xi,\eta]_{\pi^{\#}},\tilde{\mu}]_{\pi^{\#}}-\pi(d[\xi,\eta]_{\pi^{\#}})\tilde{\mu}$$

Also, by using equation \eqref{repontop}, we have the following:
\begin{align}\label{Rep1}
\nonumber\bar{D}(\tilde{\xi},\bar{D}(\eta,\mu))=&~\bar{D}(\tilde{\xi},[[\eta,\mu]]_{\pi^{\#}}-\pi(d\eta)\tilde{\mu})\\\nonumber
=&~[[\tilde{\xi},[[\eta,\mu]]_{\pi^{\#}}]]_{\pi^{\#}}-\pi(d\tilde{\xi}).[[\tilde{\eta},\tilde{\mu}]]_{\pi^{\#}}-[[\tilde{\xi},\pi(d\eta)\tilde{\mu}]]_{\pi^{\#}}\\\nonumber
&+\pi(d\tilde{\xi}).\psi^*(\pi(d\eta))\tilde{\tilde{\mu}}\\
=&~[[\tilde{\xi},[[\eta,\mu]]_{\pi^{\#}}]]_{\pi^{\#}}-\pi(d\tilde{\xi}).[[\tilde{\eta},\tilde{\mu}]]_{\pi^{\#}}-\psi^*(\pi(d\eta))[[\tilde{\xi}, \tilde{\mu}]]_{\pi^{\#}}\\\nonumber
&-\pi^{\#}(\tilde{\tilde{\xi}})(\pi(d\eta)).\tilde{\tilde{\mu}}+\pi(d\tilde{\xi}).\psi^*(\pi(d\eta))\tilde{\tilde{\mu}}
\end{align}

Similarly,
\begin{align}\label{Rep2}
 \bar{D}(\tilde{\eta},\bar{D}(\xi,\mu))
=&~[[\tilde{\eta},[[\xi,\mu]]_{\pi^{\#}}]]_{\pi^{\#}}-\pi(d\tilde{\eta}).[[\tilde{\xi},\tilde{\mu}]]_{\pi^{\#}}-\psi^*(\pi(d\xi))[[\tilde{\eta}, \tilde{\mu}]]_{\pi^{\#}}\\\nonumber
&-\pi^{\#}(\tilde{\tilde{\eta}})(\pi(d\xi)).\tilde{\tilde{\mu}}+\pi(d\tilde{\eta}).\psi^*(\pi(d\xi))\tilde{\tilde{\mu}}
\end{align}
Now let us observe the following: 

\begin{itemize}
\item $L_X(\gamma(\alpha))=L_X(\gamma)(Ad_{\psi^*}\alpha)+\tilde{\gamma}
(L_X(\alpha))$ for all $X\in \Gamma\psi^{!}{TM}$, $\alpha\in \Gamma(\wedge^k\psi^{!}{TM})$, and $\gamma\in \Gamma(\wedge^k\psi^{!}{T^*M})$. 
\item $L_{Ad_{\psi^*}(X)}(d\xi)=dL_{X}\xi$ for all $X\in \Gamma\psi^{!}{TM}$, and $\xi\in \Gamma(\wedge\psi^{!}{T^*M})$.
\end{itemize}
Using above observations and the fact that $Ad_{\psi^*}\circ \pi^{\#}=\pi^{\#}\circ Ad_{\psi^*}^{\dagger}$, we have the following :
\begin{align}\label{Rep3}
\nonumber
&-\pi^{\#}(\tilde{\tilde{\xi}})(\pi(d\eta))+\pi^{\#}(\tilde{\tilde{\eta}})(\pi(d\xi))\\\nonumber
&=-[[\pi^{\#}(\tilde{{\xi}}),\pi]]_{\psi^*}(d\tilde{\eta})-Ad_{\psi^*}\circ\pi(L_{\pi^{\#}(\tilde{{\xi}})}d\eta)+[[\pi^{\#}(\tilde{{\eta}}),\pi]]_{\psi^*}(d\tilde{\xi})+Ad_{\psi^*}\circ\pi(L_{\pi^{\#}(\tilde{{\eta}})}d\xi)\\\nonumber
&=[[\pi^{\#}(\tilde{{\eta}}),\pi]]_{\psi^*}(d\tilde{\xi})-[[\pi^{\#}(\tilde{{\xi}}),\pi]]_{\psi^*}(d\tilde{\eta})+\pi(L_{\pi^{\#}({\tilde{\eta}})}d\xi-L_{\pi^{\#}(\tilde{{\xi}})}d\eta)\\\nonumber
&=[[\pi^{\#}(\tilde{{\eta}}),\pi]]_{\psi^*}(d\tilde{\xi})-[[\pi^{\#}(\tilde{{\xi}}),\pi]]_{\psi^*}(d\tilde{\eta})+\pi( d(L_{\pi^{\#}({\eta})}d\xi-L_{\pi^{\#}({\xi})}\eta+d\pi(\xi,\eta)))\\
&=-\pi(d[[\xi,\eta]]_{\pi^{\#}}).
\end{align}
Thus, by using equations \eqref{repontop}, \eqref{Rep1}, \eqref{Rep2}, and \eqref{Rep3}, we immediately get the following:
\begin{itemize}
\item $\bar{D}([\xi,\eta]_{\pi^{\#}},\tilde{\mu})=\bar{D}(\tilde{\xi},\bar{D}(\eta,\mu))-\bar{D}(\tilde{\eta},\bar{D}(\xi,\mu))$;
\item $\bar{D}(\tilde{\xi},\tilde{\mu})=Ad_{\psi^{*}}^{\dagger}\bar{D}(\xi,\mu);$
\end{itemize}
i.e. the pair $(\bar{D},{Ad_{\psi^*}}^{\dagger})$ is a representation of hom-Lie algebra $(\Gamma(\psi^{!}T^{*} M),[-,-]_{\pi^{\#}},{Ad_{\psi^*}}^{\dagger})$ on the line bundle $\Gamma(\wedge^{top} \psi^{!}T^{*} M)$ with respect to the map ${Ad_{\psi^*}}^{\dagger}$. Furthermore, $\bar{D}(f.\xi,\mu)=\psi^*(f).\bar{D}(\xi,\mu)$ and $\bar{D}(\xi,f.\mu)=\psi^*(f).\bar{D}(\xi,\mu)+\rho^{\#}(\tilde{\xi})(f).\tilde{\mu}$ for any $f\in C^{\infty}(M)$, $\xi\in \Gamma(\wedge\psi^{!}{T^*M})$, and $\mu\in \Gamma(\wedge^{top}\psi^{!}{T^*M})$. Hence, the map $\bar{D}$ is a representation of $\mathcal{T^*}$ on the hom-bundle $(\wedge^{top} \psi^{!}T^{*} M,\psi,{Ad_{\psi^*}}^{\dagger})$.
\end{proof}
\begin{Rem}
Note that Equation \eqref{repontop} can be rewritten as 
\begin{align*}
\bar{D}(\xi,\mu)
&=L_{\pi^{\#}(\xi)}(\mu)+\pi(d\xi).\tilde{\mu}\\
&=\tilde{\xi}\wedge d i_{\pi}(\bar{\mu})
\end{align*}
\end{Rem}
By Proposition \ref{Corres3}, the representation $\bar{D}$ of $\mathcal{T}^*$ on hom-bundle $(\wedge^{top} \psi^{!}T^{*} M,\psi,{Ad_{\psi^*}}^{\dagger})$ corresponds to an exact generator of the hom-Gerstenhaber algebra associated to the cotangent hom-Lie algebroid $\mathcal{T}^*$. In particular, we get the operator 
$[i_{\pi},d]_{Ad_{\psi^*}^{\dagger}}:\Gamma(\wedge^k \psi^{!}T^{*} M)\rightarrow \Gamma(\wedge^{k-1} \psi^{!}T^{*} M)$, defined as
\begin{align*}
[i_{\pi},d]_{Ad_{\psi^*}^{\dagger}}&=Ad_{\psi^*}^{\dagger}\circ i_{\pi}\circ (Ad_{\psi^*}^{\dagger})^{-1}\circ d\circ (Ad_{\psi^*}^{\dagger})^{-1}-Ad_{\psi^*}^{\dagger}\circ d\circ (Ad_{\psi^*}^{\dagger})^{-1}\circ i_{\pi}\circ (Ad_{\psi^*}^{\dagger})^{-1}
\end{align*}

We call the homology of the chain complex $(\oplus_{k\geq }\Gamma(\wedge^k \psi^{!}T^{*} M),[i_{\pi},d]_{Ad_{\psi^*}^{\dagger}})$, the \textbf{hom-Poisson homology} associated to hom-Poisson manifold $(M,\psi,\pi)$. Note that in the case $\psi=id_M$, it gives the Poisson homology.

\section{Strong differential hom-Gerstenhaber algebras}

Let $(A,\psi,\alpha)$ be invertible hom-bundle over $M$, $(A,\psi,[-,-]_A,\rho_A,\alpha)$ and $(A^*,\psi, [-,-]_{A^*},\rho_{A^*},{\alpha^{\dagger}})$ be two Hom-Lie algebroids in duality (here, $\alpha^{\dagger}(\xi)(X)=\psi^*(\xi(\alpha^{-1}(X)))$ for $\xi\in \Gamma A^*$ and $X\in \Gamma A$). Then recall from \cite{hom-Lie1}  that the pair $(A,A^*)$ is called a hom-Lie bialgebroid if
\begin{equation}\label{dercond}
d_*[x,y]_A = [d_*x,\alpha(y)] + [\alpha(x),d_*y],~~~ \mbox{for all}~x,y\in \Gamma A.
\end{equation} 
 
 Here, the map $d_*$ is the coboundary map given by \eqref{differential} for the hom-Lie algebroid $(A^*,\psi,[-,-]_{A^*},\\\rho_{A^*},{\alpha^{\dagger}})$ and the bracket on the right hand side is the hom-Gerstenhaber bracket induced on exterior bundle over $A$ by the hom-Lie algebroid structure $(A,\psi,[-,-]_A,\rho_A,\alpha)$.
\begin{Def}
A differential hom-Gerstenhaber algebra is a hom-Gerstenhaber algebra $\mathfrak{A}:=(\oplus_{i\in\mathbb{Z}_+}\mathcal{A}_i,\wedge,[-,-],\alpha)$ equipped with a degree $1$ map $d:\mathfrak{A}\rightarrow \mathfrak{A}$ such that 
\begin{itemize}
\item the map $d$ is a $(\alpha,\alpha)$-derivation of degree $1$ with respect to the graded commutative and associative product $\wedge$, i.e.
$d(X\wedge Y)=d(X)\wedge\alpha(Y)+(-1)^{|X|}\alpha(X)\wedge d(Y)$ for $X,Y\in \mathfrak{A}$.
\item $d^2=0$ and the map $d$ commutes with $\alpha$, i.e. $d\circ \alpha=\alpha\circ d$.
\end{itemize}

The hom-Gerstenhaber algebra $\mathfrak{A}$ is said to be a \textbf{strong differential hom-Gerstenhaber algebra} if $d$ also satisfies the equation:
$d[X,Y]=[dX,\alpha(Y)]+[\alpha(X),dY]$
for $X,Y\in \mathfrak{A}$. Let us denote this strong differential hom-Gerstenhaber algebra by the tuple $(\oplus_{i\in\mathbb{Z}_+}\mathcal{A}_i,\wedge,[-,-],\alpha,d)$. If the map $\alpha:\mathfrak{A}\rightarrow \mathfrak{A} $ is an invertible map, then we say $\mathfrak{A}$ is a strong differential regular hom-Gerstenhaber algebra.
\end{Def}

\begin{Exm}\label{sdga1}
Let $(\mathfrak{g},[-,-]_{\mathfrak{g}},\alpha)$ and $(\mathfrak{g}^*,[-,-]_{\mathfrak{g}^*},\alpha^{\dagger})$ be two regular hom-Lie algebras (where $\alpha^{\dagger}(\xi)(X)=\xi(\alpha^{-1}(X))$ for $\xi\in L^*$ and $X\in L$). Recall from \cite{hom-Liebi}, $(L,L^*)$ is a purely hom-Lie bialgebra if the following compatibility condition holds:
\begin{equation}\label{bialcon}
\Delta([x,y]_L)=[\alpha^{-1}(x),\Delta(y)]_{\mathfrak{G}}+[\Delta(x),\alpha^{-1}(y)]_{\mathfrak{G}}.
\end{equation}

Here, $x,y\in L$, the bracket $[-,-]_{\mathfrak{G}}$ is the hom-Gerstenhaber bracket obtained by extending $[-,-]_{\mathfrak{g}}$, and $\Delta:L\rightarrow \wedge^2 L$ is the dual map of the hom-Lie algebra bracket $[-,-]_{\mathfrak{g}^*}:\wedge^2\mathfrak{g}^*\rightarrow \mathfrak{g}^*$. 

Let us first recall from Section $3$ of \cite{hom-Liebi} that the adjoint representation of $(\mathfrak{g},[-,-]_{\mathfrak{g}},\alpha)$ on $\wedge^k \mathfrak{g}$ is given by $ad_x(Y)=[x,Y]_\mathfrak{G}$
for $x\in \mathfrak{g}$, $Y\in\wedge^k \mathfrak{g}$; and $[-,-]_{\mathfrak{G}}$ is the associated hom-Gerstenhaber bracket on the exterior algebra $\wedge^*\mathfrak{g}$. Then the above equation \eqref{bialcon} can also be expressed as
\begin{equation}\label{bialcon2}
\Delta([x,y]_L)=ad_{\alpha^{-1}(x)}\Delta(y)-ad_{\alpha^{-1}(y)}\Delta(x).
\end{equation}
Also, recall that the co-adjoint representation of $(\mathfrak{g},[-,-]_{\mathfrak{g}},\alpha)$ on $\mathfrak{g}^*$ with respect to $\alpha^{\dagger}=(\alpha^{-1})^*$ is given by the map $ad^*:\mathfrak{g}\rightarrow \mathfrak{g}l(\mathfrak{g}^*)$ such that
$<ad_x^*(\xi),y>=-<\xi,ad_x(y)>, $
and $ad_x^*(\xi)=ad_{\alpha(x)}^*((\alpha^{-2})^{*}\xi)$
for $x,y\in \mathfrak{g}$ and $\xi\in \mathfrak{g}^*$. The map $ad_x^*:\mathfrak{g}^*\rightarrow \mathfrak{g}^*$ for any $x\in\mathfrak{g}$ is extended to higher degree elements by the following equation: 
\begin{equation}\label{co-ad rep}
ad_x^{*}(\xi_1\wedge\cdots\wedge \xi_n)=\sum_{1\leq i\leq n}\alpha^*(\xi_1)\wedge ad_x^*(\xi_i)\wedge\cdots\wedge \alpha^*(\xi_n)
\end{equation}
where $\xi_i\in\mathfrak{g}^*$ for all $1\leq i\leq n$. Let $d_*$ be the coboundary operator of the hom-Lie algebra $(\mathfrak{g}^*,[-,-]_{\mathfrak{g}^*},\alpha^{\dagger})$ given by equation (\ref{NCBD}) (Consider the hom-Lie algebra $(\mathfrak{g}^*,[-,-]_{\mathfrak{g}^*},\alpha^{\dagger})$ as a hom-Lie algebroid over point manifold $M$). Note that for any $x,y\in \mathfrak{g}$, and $\eta,\xi\in \mathfrak{g}^*$ we have
$$d_{*}[x,y](\xi\wedge \eta)=-[x,y]((\alpha^{-2})^*([\xi,\eta]))=-<\Delta(\alpha^2[x,y]),\xi\wedge \eta>.$$ 
Then by using equations \eqref{bialcon2}, and \eqref{co-ad rep}, we get the following derivation condition:
$$d_*[X,Y]_{\mathfrak{G}}=[d_*X,\alpha_{\mathfrak{G}}(Y)]_{\mathfrak{G}}+[\alpha_{\mathfrak{G}}(X),d_*Y]_{\mathfrak{G}}.$$
for any $X,Y\in\wedge^*\mathfrak{g}$. Thus the hom-Gerstenhaber algebra $(\mathfrak{G}=\wedge^*\mathfrak{g},\wedge,[-,-]_{\mathfrak{G}},\alpha_{\mathfrak{G}})$ associated to the hom-Lie algebra $(\mathfrak{g},[-,-]_{\mathfrak{g}},\alpha)$ gives a strong differential hom-Gerstenhaber algebra with differential $d_*$. 

Moreover, considering the coboundary operator $d$ of the hom-Lie algebra $(\mathfrak{g},[-,-]_{\mathfrak{g}},\alpha)$ given by \eqref{NCBD}, the hom-Gerstenhaber algebra $(\mathfrak{G}^*=\wedge^*\mathfrak{g}^*,\wedge,[-,-]_{\mathfrak{G}}^*,\alpha_{\mathfrak{G}^*})$ associated to the hom-Lie algebra $(\mathfrak{g}^*,[-,-]_{\mathfrak{g}^*},\alpha^{\dagger})$ gives a strong differential hom-Gerstenhaber algebra with the strong differential $d:\mathfrak{G}^*\rightarrow \mathfrak{G}^*$.
\end{Exm}

\begin{Exm} Suppose $(M,\psi,\pi)$ is a hom-Poisson manifold and $\psi:M\rightarrow M$ is a diffeomorphism. Then we have  the tangent hom-Lie algebroid $(\psi^!TM,\psi,[-,-]_{\psi^*}, Ad_{\psi^*},Ad_{\psi^*})$ and the cotangent hom-Lie algebroid $(\psi^{!}T^{*} M,\psi,[-,-]_{\pi^{\#}},{Ad_{\psi^*}}^{\dagger},\pi^{\#}\circ Ad_{\psi^*}^{\dagger})$ respectively. Let us denote the corresponding differentials of tangent and cotangent hom-Lie algebroids with trivial representation by $d$ and $d_*,$ respectively. Then we have the following observations:
 
\begin{itemize}
\item For any $X\in \Gamma\wedge^k(\psi^!T^*M)$,  $d_*X=[[\pi,X]]_{\psi^*}$, where $[[-,-]]_{\psi^*}$ is the hom-Gerstenhaber bracket obtained by extending the bracket $[-,-]_{\psi^*}$. By using the graded hom-Jacobi identity for $[[-,-]]_{\psi^*}$, we obtain the following equation:
$$d_*[x,y]_{\psi^*}=[[d_*x,Ad_{\psi^*}(y)]]_{\psi^*}+[[Ad_{\psi^*}(x),d_*y]]_{\psi^*}$$
for any $x,y\in \Gamma\psi^!TM$. Finally, by using the hom-Leibniz rule for the bracket $[[-,-]]_{\psi^*}$ with respect to the graded commutative product, one obtains the following derivation condition:
$$d_*[[X,Y]]_{\psi^*}=[[d_*X,\bar{Ad_{\psi^*}}(Y)]]_{\psi^*}+[[\bar{Ad_{\psi^*}}(X),d_*Y]]_{\psi^*}$$
for any $X,Y\in \oplus_{i\in\mathbb{Z}_+}\Gamma \wedge^i(\psi^!TM)$. Thus the hom-Gerstenhaber algebra associated to tangent hom-Lie algebroid, is a strong differential hom-Gerstenhaber algebra with the differential $d$.

\item Hom-Gerstenhaber algebra $\mathfrak{A}^*=(\oplus_{i\in\mathbb{Z}_+}\Gamma \wedge^i(\psi^!T^*M),\wedge,[[-,-]]_{\pi^{\#}},\bar{Ad_{\psi^*}^{\dagger}})$ (the map $\bar{Ad_{\psi^*}^{\dagger}}$ is the extension of the map $Ad_{\psi^*}^{\dagger}$ to the exterior bundle) associated to the cotangent hom-Lie algebroid, is a strong differential hom-Gerstenhaber algebra with the differential: $d_*.$
\end{itemize}  
\end{Exm}

\begin{Exm}\label{SDHGCOMP}
Given a Gerstenhaber algebra $(\mathcal{A},[-,-],\wedge)$ with a strong differential $d$ and an endomorphism $\alpha:(\mathcal{A},[-,-],\wedge)\rightarrow (\mathcal{A},[-,-],\wedge)$ satisfying $d\circ \alpha=\alpha\circ d$, then first note that the quadruple $(\mathcal{A},\wedge,[-,-]_{\alpha}=\alpha\circ [-,-],\alpha)$ is a hom-Gerstenhaber algebra. Let us define $d_{\alpha}:=\alpha\circ d$, then $d_{\alpha}:\mathcal{A}\rightarrow\mathcal{A}$ is a map of degree $1$. Since the differential $d$ is a derivation of degree $1$ with respect to graded commutative product $\wedge$, the map $d_{\alpha}$ satisfies the following derivation condition with respect to $\wedge$: 
$$d_{\alpha}(X\wedge Y)=d_{\alpha}(X)\wedge \alpha(Y)+(-1)^{|X|}\alpha(X)\wedge d_{\alpha}(Y)$$
for all $X,Y\in \mathcal{A}$. Furthermore, the map $d_\alpha$ satisfies square zero condition and for all $X,Y\in \mathcal{A}$ it satisfies the following derivation condition with respect to $[-,-]_{\alpha}$:
$$d_{\alpha}[X,Y]_{\alpha}=[d_{\alpha}(X),\alpha(Y)]_{\alpha}+[\alpha(X),d_{\alpha}(Y)]_{\alpha}.$$
Thus the above hom-Gerstenhaber algebra $(\mathcal{A},\wedge,[-,-]_{\alpha}=\alpha\circ [-,-],\alpha)$ is a strong differential hom-Gerstenhaber algebra with strong differential $d_{\alpha}$.
\end{Exm}

Let $A$ be a vector bundle over $M$, $\psi:M\rightarrow M$ be a smooth map, $\beta:\Gamma A\rightarrow \Gamma A$ be a linear map then the space $\oplus_{i\in\mathbb{Z}_+}\Gamma(\wedge^i A)$ has a hom-Gerstenhaber algebra structure if and only if we have a hom-Lie algebroid structure on hom-bundle $(A, \psi, \beta)$, see Theorem 4.4, \cite{hom-Lie}. 

 Now, let $(A,\psi,\beta)$ be a hom-bundle, where $\psi: M\rightarrow M$ is a diffeomorphism, $\beta:\Gamma A\rightarrow \Gamma A$ is a bijective linear map. Also let $\mathfrak{A}=(\oplus_{i\in\mathbb{Z}_+}\Gamma(\wedge^iA),\wedge,[-,-],\alpha,d)$ be a strong differential regular hom-Gerstenhaber algebra structure on the space of multisections $\oplus_{i\in\mathbb{Z}_+}\Gamma(\wedge^iA)$, where $\alpha:\mathfrak{A}\rightarrow \mathfrak{A}$ is a degree $0$ map such that $\alpha_0=\psi^*:C^{\infty}(M)\rightarrow C^{\infty}(M)$ is an algebra automorphism induced by $\psi$, $\alpha_1=\beta$ and the map $\alpha$ is an extension of $\alpha_0$ and $\alpha_1$ to the higher sections. 

\begin{Thm}
The tuple $(\oplus_{i\in\mathbb{Z}_+}\Gamma(\wedge^iA),\wedge,[-,-],\alpha,d)$ is a strong differential regular hom-Gersten-haber algebra if and only if $(A,A^*)$ is a hom-Lie bialgebroid.

\begin{proof}
Let us assume that $\mathfrak{A}=(\oplus_{i\in\mathbb{Z}_+}\Gamma(\wedge^iA),\wedge,[-,-],\alpha,d)$ is a strong differential regular hom-Gerstenhaber algebra. It is clear that $(A,\psi,[-,-]_A,\rho_A,\alpha_1)$ is a hom-Lie algebroid, where $[-,-]_A$ and $\rho_A$ are obtained by restricting $[-,-]$ on $\Gamma A\times \Gamma A$ and $\Gamma A\times C^{\infty}(M)$, respectively (Theorem 4.4, \cite{hom-Lie}). Since $(\oplus_{i\in\mathbb{Z}_+}\Gamma(\wedge^iA),\alpha,d)$ is an $(\alpha, \alpha)$-differential graded algebra, by using Theorem \ref{dgca1}, we obtain the bracket $[-,-]_{A^*}$ on $ A^*$ and the anchor map $\rho_{A^*}$ such that $(A^*,\phi, [-,-]_{A^*},\rho_{A^*},\beta^{\dagger})$ is a hom-Lie algebroid (here, $\beta^{\dagger}(\xi)(X)=\psi(\xi(\beta^{-1}(X)))$ for $\xi\in\Gamma A^*$ and $X\in \Gamma A$). Note that the differential corresponding to this hom-Lie algebroid structure, given by equation \eqref{NCBD}, coincides with the differential $d$. Moreover, the derivation property of the differential $d$ with respect to the Lie bracket $[-,-]_A$ on $A$, i.e. the equation: 
$$d[X,Y]_A=[dX,\alpha_1(Y)]+[\alpha_1(X),dY].$$
for $X,Y\in \Gamma A$ implies that $(A,A^*)$ is a hom-Lie bialgebroid. 
 
Conversely, for a given hom-Lie bialgebroid $(A,A^*)$ we obtain a hom-Gerstenhaber algebra structure given by $\mathfrak{A}=(\oplus_{i\in\mathbb{Z}_+}\Gamma(\wedge^iA),\wedge,[-,-],\alpha)$ since there is a hom-Lie algebroid structure on the hom-bundle $(A,\psi,\beta) $ over $M$. Let $d$ be the differential given by equation \eqref{NCBD}, for the hom-Lie algebroid structure on $(A^*,\psi,\beta^{\dagger})$. Then $\mathfrak{A}$ is an $(\alpha,\alpha)$-differential graded commutative algebra with the differential $d$. Now, by equation \eqref{dercond}, it is clear that $\mathfrak{A}=(\oplus_{i\in\mathbb{Z}_+}\Gamma(\wedge^iA),\wedge,[-,-],~\alpha,d)$ is a strong differential regular hom- Gerstenhaber algebra.
\end{proof}
\end{Thm}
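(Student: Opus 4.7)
The plan is to assemble the theorem from two earlier correspondences in this paper and then identify the strong-differential condition with the bialgebroid compatibility. The two pieces are Theorem~4.4 of \cite{hom-Lie}, which matches hom-Gerstenhaber algebra structures on $\oplus_i \Gamma(\wedge^i A)$ with hom-Lie algebroid structures on $(A, \psi, \beta)$, and Theorem~\ref{dgca1} applied to the dual hom-bundle $(A^*, \psi, \beta^{\dagger})$, which matches hom-Lie algebroid structures on $A^*$ with $(\hat{\beta^{\dagger}},\hat{\beta^{\dagger}})$-differential graded commutative algebra structures on $\oplus_i \Gamma(\wedge^i (A^*)^*) \cong \oplus_i \Gamma(\wedge^i A)$, where the map $\hat{\beta^{\dagger}}$ coincides with $\alpha$ under our hypotheses.

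For the forward direction, I would first apply Theorem~4.4 of \cite{hom-Lie} to extract a hom-Lie algebroid $(A, \psi, [-,-]_A, \rho_A, \beta)$ by restricting the hom-Gerstenhaber bracket and its anchor to $\Gamma A \times \Gamma A$ and $\Gamma A \times C^{\infty}(M)$. Next, since $d$ is an $(\alpha,\alpha)$-derivation of degree one with $d^2 = 0$ and $d \circ \alpha = \alpha \circ d$, the triple $(\oplus_i \Gamma(\wedge^i A), \alpha, d)$ is an $(\alpha,\alpha)$-differential graded commutative algebra; Theorem~\ref{dgca1} then supplies a hom-Lie algebroid structure $(A^*, \psi, [-,-]_{A^*}, \rho_{A^*}, \beta^{\dagger})$ whose associated differential via equation \eqref{NCBD} coincides with $d$. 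Finally, specialising the strong-differential condition to $X, Y \in \Gamma A$ gives exactly the compatibility \eqref{dercond}, so $(A, A^*)$ is a hom-Lie bialgebroid.

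The converse is the mirror image: from the hom-Lie algebroid on $A$, Theorem~4.4 of \cite{hom-Lie} furnishes the hom-Gerstenhaber structure $(\oplus_i \Gamma(\wedge^i A), \wedge, [-,-], \alpha)$, while the hom-Lie algebroid on $A^*$ furnishes, via equation \eqref{NCBD} and Theorem~\ref{dgca1}, an $(\alpha,\alpha)$-derivation $d$ of degree one with $d^2 = 0$ and $d\circ\alpha=\alpha\circ d$. The only nontrivial step is to verify the strong-differential identity
\[
d[X,Y] = [dX, \alpha(Y)] + [\alpha(X), dY] \qquad (X, Y \in \mathfrak{A}),
\]
when we are given this only on $\Gamma A \times \Gamma A$ through \eqref{dercond}.

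This bootstrapping is where I expect the real work to lie. I would proceed by induction on the total degree $|X| + |Y|$. The base case $|X|=|Y|=1$ is exactly \eqref{dercond}, while degree-zero arguments reduce to the compatibility between $d$ and the anchor $\rho_A$, which is encoded in the axioms that $(\rho_{A^*}, \psi^*)$ is a representation. For the inductive step I would write $Y = Y_1 \wedge Y_2$ with $|Y_1|, |Y_2| < |Y|$, expand both sides using the hom-Leibniz rule of Definition~\ref{HGR} for $[-,-]$, the $\alpha$-derivation property of $d$ with respect to $\wedge$, and the graded hom-Jacobi identity, and track the various sign factors. A careful comparison shows that each side decomposes into terms to which the inductive hypothesis applies on $Y_1$ and $Y_2$; a symmetric manipulation handles the step in the first variable. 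Once the identity is extended from generators through these Leibniz-type manipulations, the tuple $(\oplus_i \Gamma(\wedge^i A), \wedge, [-,-], \alpha, d)$ is a strong differential regular hom-Gerstenhaber algebra, completing the proof.
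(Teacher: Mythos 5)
Your proposal is correct and follows essentially the same route as the paper: Theorem~4.4 of \cite{hom-Lie} for the hom-Lie algebroid on $A$, Theorem~\ref{dgca1} for the dual structure on $A^*$ via the differential, and the identification of the strong-differential condition restricted to $\Gamma A$ with the compatibility \eqref{dercond}. In fact you are more careful than the paper in the converse direction, where the paper simply asserts that \eqref{dercond} yields the strong-differential identity in all degrees; your proposed induction via the hom-Leibniz rules is exactly the bootstrapping argument needed to justify that step.
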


\begin{Exm}
Let $(M,\psi,\pi)$ be a given hom-Poisson manifold where the smooth map $\psi:M\rightarrow M$ is a diffeomorphism, then there are hom-Lie algebroid structures on both the pull back bundles $\psi^{!}TM$ and $\psi^{!}T^*M$. Let $\mathfrak{A}$ be the hom-Gerstenhaber algebra corresponding to the hom-Lie algebroid structure on $\psi^{!}TM$. Here, the map $d_{\pi}:\mathfrak{A}\rightarrow \mathfrak{A}$ defined by $d_{\pi}=[\pi,-]_{\mathfrak{A}}$ makes $\mathfrak{A}$ a strong differential regular hom-Gerstenhaber algebra (since $\pi$ is a hom-Poisson bivector). Furthermore, the map $d_{\pi}=d_*$, differential obtained by equation \eqref{NCBD} for the hom-Lie algebroid structure on $\psi^{!}T^*M$. Therefore, the hom-Lie bialgebroid structure on $(\psi^{!}TM,\psi^{!}T^*M)$ corresponding to this strong differential regular hom-Gerstenhaber algebra is the canonical hom-Lie bialgebroid structure on $(\psi^{!}TM,\psi^{!}T^*M)$ for a given hom-Poisson manifold as defined in \cite{hom-Lie1}.
\end{Exm}

\begin{Exm}
Let $(\mathfrak{g},[-,-]_{\mathfrak{g}},\alpha)$ and $(\mathfrak{g}^*,[-,-]_{\mathfrak{g}^*},\alpha^{\dagger})$ be two regular hom-Lie algebras (where $\alpha^{\dagger}(\xi)(X)=\xi(\alpha^{-1}(X))$ for $\xi\in L^*$ and $X\in L$). If $(L,L^*)$ is a purely hom-Lie bialgebra, then $(L,L^*)$ is a hom-Lie bialgebroid over a point manifold. In this case the associated strong differential hom-Gerstenhaber algebra is $(\mathfrak{G}=\wedge^*\mathfrak{g},\wedge,[-,-]_{\mathfrak{G}},\alpha_{\mathfrak{G}},d_*)$ given in Example \ref{sdga1}.
\end{Exm}

\vspace{.25cm}
{\bf Ashis Mandal and  Satyendra Kumar Mishra}\\
 Department of Mathematics and Statistics,
Indian Institute of Technology,
Kanpur 208016, India.\\
e-mail: amandal@iitk.ac.in, ~~ satyendm@iitk.ac.in

\end{document}